\newcommand\redout{\bgroup\markoverwith
{\textcolor{red}{\rule[.5ex]{2pt}{0.4pt}}}\ULon}
\newcommand\cyr{%
\renewcommand\rmdefault{wncyr}%
\renewcommand\sfdefault{wncyss}%
\renewcommand\encodingdefault{OT2}%
\normalfont
\selectfont}
\DeclareTextFontCommand{\textcyr}{\cyr}
\newtheorem{theorem}{Theorem}[section]
\newtheorem{lemma}[theorem]{Lemma}
\newtheorem{proposition}[theorem]{Proposition}
\newtheorem{corollary}[theorem]{Corollary}
\newtheorem{claim}[theorem]{Claim}
\newtheorem{conjecture}[theorem]{Conjecture}
\theoremstyle{definition}
\newtheorem{remark}[theorem]{Remark}
\theoremstyle{remark}
\newcommand{\Spec}{\operatorname{Spec}}
\newcommand{\Supp}{\operatorname{Supp}}
\newcommand{\Hom}{\operatorname{Hom}}
\newcommand{\m}{\frak{m}}
\newcommand{\n}{\frak{n}}
\begin{document}

\title[Vanishing and non-negativity of the first normal Hilbert coefficient]{Vanishing and non-negativity of the first normal Hilbert coefficient}

\author{Linquan Ma}
\address{Department of Mathematics, Purdue University, West Lafayette, IN 47907 USA}
\email{ma326@purdue.edu}

\author{Pham Hung Quy}
\address{Department of Mathematics, FPT University, Hanoi, Vietnam}
\email{quyph@fe.edu.vn}

\keywords{normal Hilbert polynomial, normal Hilbert coefficients, tight Hilbert coefficients, regular local rings, big Cohen-Macaulay algebras}

\maketitle


\begin{center}
{\textit{Dedicated to Professor Ngo Viet Trung on the occasion of his 70th birthday}}
\end{center}

\begin{abstract}
Let $(R,\m)$ be a Noetherian local ring such that $\widehat{R}$ is reduced. We prove that, when $\widehat{R}$ is $S_2$, if there exists a parameter ideal $Q\subseteq R$ such that $\bar{e}_1(Q)=0$, then $R$ is regular and $\nu(\m/Q)\leq 1$. This leads to an affirmative answer to a problem raised by Goto-Hong-Mandal \cite{GHM}. We also give an alternative proof (in fact a strengthening) of their main result. In particular, we show that if $\widehat{R}$ is equidimensional, then $\bar{e}_1(Q)\geq 0$ for all parameter ideals $Q\subseteq R$, and in characteristic $p>0$, we actually have $e_1^*(Q)\geq 0$. Our proofs rely on the existence of big Cohen-Macaulay algebras.
\end{abstract}

\maketitle

\thispagestyle{empty}

\section{Introduction}
Let $(R,\m)$ be a Noetherian local ring of dimension $d$ such that $\widehat{R}$ is reduced and let $I\subseteq R$ be an $\m$-primary ideal. Then for $n\gg0$, $\ell(R/\overline{I^{n+1}})$ agrees with a polynomial in $n$ of degree $d$, and we have integers $\overline{e}_0(I),\dots, \overline{e}_d(I)$ such that
$$\ell(R/\overline{I^{n+1}})=\overline{e}_0(I)\binom{n+d}{d} - \overline{e}_1(I)\binom{n+d-1}{d-1} + \cdots + (-1)^d\overline{e}_d(I).$$
These integers $\overline{e}_i(I)$ are called the normal Hilbert coefficients of $I$.

It is well-known that $\overline{e}_0(I)$ is the Hilbert-Samuel multiplicity of $I$, which is always a positive integer. In this paper, we are interested in the first coefficient $\overline{e}_1(I)$. It was proved by Goto-Hong-Mandal \cite{GHM} that when $\widehat{R}$ is unmixed, $\overline{e}_1(I)\geq 0$ for all $\m$-primary ideals $I\subseteq R$ (which answers a question posed by Vasconcelos \cite{V}). They proposed a further problem in \cite[Section 3]{GHM} regarding the vanishing of $\overline{e}_1(I)$ and the regularity of the normalization of $R$. Since any $\m$-primary ideal $I$ is integral over a parameter ideal when the residue field is infinite, to study $\overline{e}_1(I)$ we may assume that $I=Q$ is a parameter ideal (i.e., it is generated by a system of parameters). In this paper, we prove the following main result which will lead to an affirmative answer to the question proposed in \cite{GHM}. This theorem is also a generalization of the main result of \cite{MTV}.

\begin{theorem}[Theorem \ref{theorem: Vanishing}]
Let $(R,\m)$ be a Noetherian local ring such that $\widehat{R}$ is reduced and $S_2$. If $\overline{e}_1(Q)=0$ for some parameter ideal $Q\subseteq R$, then $R$ is regular and $\nu(\m/Q)\leq 1$.
\end{theorem}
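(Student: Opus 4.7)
My plan is to complete, exploit a big Cohen--Macaulay algebra to compare the normal filtration with one coming from a Cohen--Macaulay setting, and finally reduce to the known vanishing theorem in the Cohen--Macaulay case. First, by passing to $\widehat{R}$ and then to $\widehat{R}[x]_{\m[x]}$, I would assume $R$ is complete, reduced, $S_2$, with infinite residue field; these reductions preserve both the $S_2$ hypothesis and $\overline{e}_1(Q)$. Under these hypotheses $R$ admits a big Cohen--Macaulay algebra $B$ (Hochster--Huneke in equicharacteristic, Andr\'e in mixed characteristic), and on $B$ every system of parameters of $R$ is a regular sequence.

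The central technical step is the comparison of the two filtrations $\{\overline{Q^{n+1}}\}$ and $\{Q^{n+1}B \cap R\}$. The inclusion
\[ Q^{n+1}B \cap R \;\subseteq\; \overline{Q^{n+1}} \]
holds via the relation between big Cohen--Macaulay closure (tight, plus, or perfectoid) and integral closure, and yields the length inequality $\ell_R(R/\overline{Q^{n+1}}) \leq \ell_R(R/(Q^{n+1}B \cap R))$. Since $Q$ is a regular sequence on $B$, $\operatorname{gr}_Q(B)$ is free over $B/QB$, and a filtration and length analysis---using $S_2$ to control the discrepancy between $R$ and $B$ in codimension $\leq 1$---should show that $\ell_R(R/(Q^{n+1}B \cap R))$ agrees with a polynomial of degree $d$ in $n$ whose leading coefficient is $e_0(Q)$ and whose first subleading coefficient $e_1^B(Q)$ is non-negative.

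Comparing the two polynomials and using $\overline{e}_1(Q) = 0$ together with $e_1^B(Q) \geq 0$ would force $e_1^B(Q) = 0$ and make the two polynomials agree up to order $\binom{n+d-1}{d-1}$. A standard Hilbert-coefficient argument then forces $R$ to be Cohen--Macaulay (intuitively, the map $R \to B$ becomes length-preserving through the parameter filtration to sufficient order, which is incompatible with $\depth R < d$). With $R$ Cohen--Macaulay, one applies the main theorem of \cite{MTV}, which treats exactly this case, to conclude that $R$ is regular and $\nu(\m/Q) \leq 1$.

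The main obstacle is that, without equidimensionality (which $S_2$ alone does not provide), $B/QB$ is typically of infinite length over $R$ and the standard big CM length identities fail: one cannot simply compute $\ell_R(B/Q^{n+1}B)$ as $\ell_R(B/QB)\binom{n+d}{d}$. Controlling the polynomial behavior of $\ell_R(R/(Q^{n+1}B \cap R))$ in the $S_2$ setting, and more delicately extracting Cohen--Macaulayness of $R$ from the vanishing of $e_1^B(Q)$, is the crux of the argument and will require careful use of the $S_2$ hypothesis to neutralize the low-dimensional components of $R$ that are otherwise permitted.
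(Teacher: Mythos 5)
Your reduction to the complete case and your endgame (Cohen--Macaulay plus $\overline{e}_1(Q)=0$ implies regular, via \cite{MTV}, or equivalently via \cite[Corollary 4.9]{HM} and \cite{Go}) match the paper, and the comparison $\overline{e}_1(Q)\geq e_1^B(Q)\geq 0$ is indeed established there (Theorem \ref{theorem: Nonnegativity}; note also that a complete reduced $S_2$ local ring is automatically equidimensional, so your worry on that point is moot and Lemma \ref{lemma: bigCM} supplies the algebra $B$ satisfying $(\dagger)$). But there is a genuine gap at the crux: the step ``$e_1^B(Q)=0$ forces $R$ to be Cohen--Macaulay by a standard Hilbert-coefficient argument'' is not standard and is not available. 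The result you are implicitly invoking, namely that $e_1(Q)=0$ implies Cohen--Macaulayness \cite{GGH10}, concerns the \emph{ordinary} first coefficient, and since $e_1(Q)\leq 0\leq e_1^B(Q)$ always holds, the vanishing of $e_1^B(Q)$ gives no information about $e_1(Q)$. Indeed, the assertion that $e_1^B(Q)=0$ implies $R$ is (${\rm BCM}_B$-rational, hence) Cohen--Macaulay is precisely Conjecture \ref{conjecture: BCMrational} of the paper, which the authors leave open; they can only prove it under the much stronger hypothesis $e_1^B(Q)=e_1(Q)$ (Proposition \ref{proposition: BCMrational}). So your proposal assumes the hard part of the theorem at the critical juncture.

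The paper's actual route around this obstruction is an induction on $d=\dim R$: one cuts by a general element $x=t_1x_1+\cdots+t_dx_d$ of $Q$ over $R'=R[t_1,\dots,t_d]_{\m R[t_1,\dots,t_d]}$, and uses the Hong--Ulrich specialization theorem \cite{HU} to show that $\overline{Q^n}(R'/xR')=\overline{Q^n(R'/xR')}$ for $n\gg 0$, whence $\overline{e}_1(QR'')=0$ for $R''=\widehat{R'}/x\widehat{R'}$. Since $R''$ is only $S_2$ on the punctured spectrum, one passes to its $S_2$-ification $S$, shows $\overline{e}_1(QS)=0$ by a projection-formula/length comparison, and applies the inductive hypothesis to conclude $S$ is regular. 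Cohen--Macaulayness of $R$ is then extracted \emph{a posteriori} from regularity of $S$ by local cohomology: for $d\geq 4$ a Nakayama argument on $H_\m^2(R')$ suffices, and for $d=3$ one needs a diagram chase comparing $H_\m^\bullet(R')$ with $H_\m^\bullet(B)$ for a big Cohen--Macaulay algebra $B$, using that $B/xB$ is faithfully flat over the regular ring $S$. If you want to salvage your approach, you would need either a proof of Conjecture \ref{conjecture: BCMrational} or an independent argument that $\overline{e}_1(Q)=0$ forces $\depth R=d$; neither is supplied by the filtration comparison you describe.
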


In \cite{GMV}, it was shown that when $R$ has characteristic $p>0$, for $n\gg0$, $\ell(R/(I^{n+1})^*)$ also agrees with a polynomial of degree $d$ and one can define the tight Hilbert coefficients $e_0^*(I),\dots, e_d^*(I)$ in a similar way (see Section 2 for more details). It is easy to see that $\overline{e}_1(I)\geq e_1^*(I)$. We strengthen the main result of \cite{GHM} in characteristic $p>0$ by showing that $e_1^*(Q)\geq 0$ for any parameter ideal $Q\subseteq R$ under mild assumptions.

\begin{theorem}[Corollary \ref{corollary: tightHilbertCoefficient}]
Let $(R,\m)$ be an excellent local ring of characteristic $p>0$ such that $\widehat{R}$ is reduced and equidimensional. Then we have $e^*_1(Q)\geq 0$ for all parameter ideals $Q\subseteq R$.
\end{theorem}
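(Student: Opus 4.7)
My plan for Corollary~\ref{corollary: tightHilbertCoefficient} is to extract it as a formal consequence of the paper's big Cohen-Macaulay non-negativity theorem that drives the proof of $\bar e_1(Q)\geq 0$ in the equidimensional case. The natural form of that theorem is as follows: for any big Cohen-Macaulay algebra $B$ over $R$ and any parameter ideal $Q$, the function $n\mapsto \ell_R(R/(Q^{n+1}B\cap R))$ is eventually polynomial of degree $d$ in $n$, with leading term $e_0(Q)\binom{n+d}{d}$, and its next coefficient $e_1^B(Q)$ (defined via the same binomial expansion as $e_1^*(Q)$) satisfies $e_1^B(Q)\geq 0$. The original statement $\bar e_1(Q)\geq 0$ is then the instance of this theorem for a big CM algebra dominating $R$.

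The characteristic $p$ hypothesis enters via two standard inputs applied to $B=R^+$, the absolute integral closure of $R$: the Hochster-Huneke theorem that $R^+$ is a (balanced) big Cohen-Macaulay algebra over $R$, and the elementary inclusion $IR^+\cap R\subseteq I^*$ of plus closure into tight closure, valid for every ideal $I\subseteq R$ in characteristic $p$. Applying the latter to $I=Q^{n+1}$ yields
$$\ell_R\bigl(R/(Q^{n+1}R^+\cap R)\bigr)\;\geq\;\ell_R\bigl(R/(Q^{n+1})^*\bigr)\quad\text{for all }n.$$
By \cite{GMV} the right-hand side is eventually polynomial of degree $d$ with the same leading coefficient $e_0(Q)$ as the left-hand side, so comparing coefficients of $\binom{n+d-1}{d-1}$ forces $e_1^{R^+}(Q)\leq e_1^*(Q)$. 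Combining with the big Cohen-Macaulay theorem applied to $B=R^+$ gives $e_1^*(Q)\geq e_1^{R^+}(Q)\geq 0$, which is exactly the desired inequality.

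Essentially all of the substantive effort is absorbed into the big Cohen-Macaulay non-negativity theorem itself, which is the genuine obstacle and where the paper's technical work is concentrated; the present corollary becomes a short inequality chase once one observes that in characteristic $p$ the pair $(R^+, \text{plus closure})$ supplies precisely the big Cohen-Macaulay algebra needed to lower-bound $e_1^*(Q)$. The one small point I would want to verify is that the big CM theorem is stated uniformly enough in both $B$ and $Q$ to be invoked with $B=R^+$ for an arbitrary parameter ideal $Q$, but this is the natural form of such a result.
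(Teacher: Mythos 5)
Your proposal is correct and, at its core, follows the same route as the paper: both reduce the corollary to the big Cohen--Macaulay non-negativity theorem (Theorem \ref{theorem: Nonnegativity}), which is where all the substantive work lives. The difference is in the auxiliary input linking the tight closure filtration to a BCM filtration. The paper (Remark \ref{remark: tightvsBCM}) invokes Hochster's theorem that over a complete local domain of characteristic $p$ there is a balanced big Cohen--Macaulay algebra $B$ with $IB\cap R=I^*$ for \emph{all} ideals $I$, so that $e_1^B(Q)=e_1^*(Q)$ on the nose; you instead take $B=R^+$ and use only the one-sided containment $IR^+\cap R\subseteq I^*$, paying for it with the comparison $e_1^{R^+}(Q)\le e_1^*(Q)$ (which is the correct direction: if $(Q^{n})^{R^+}\subseteq (Q^{n})^*$ with equal leading coefficients, the larger filtration has the larger $e_1$). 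Your version trades the ``tight closure is computed exactly by a big CM algebra'' theorem for the Hochster--Huneke theorem that $R^+$ is balanced big Cohen--Macaulay --- comparably deep but a more standard citation --- and is a perfectly valid alternative. Two loose ends to tie up. First, $R^+$ and the Hochster--Huneke theorem concern excellent local \emph{domains}; since $\widehat{R}$ is only assumed reduced and equidimensional, you should replace $R^+$ by $\prod_i(\widehat{R}/P_i)^+$ over the minimal primes $P_i$ of $\widehat{R}$ (equidimensionality is what makes this product balanced big CM over $\widehat{R}$), note that (\ref{eqn: equation dagger}) holds for it automatically because it is integral over $\widehat{R}$, and use excellence to pass between $I^*$ and $(I\widehat{R})^*$ and to check tight closure modulo each minimal prime --- exactly the reductions the paper performs in Remark \ref{remark: tightvsBCM} and Lemma \ref{lemma: bigCM}. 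Second, to define $e_1^{R^+}(Q)$ you need the eventual polynomiality of $n\mapsto\ell(R/(Q^{n+1})^{R^+})$; this is not automatic for an arbitrary big CM algebra but follows from (\ref{eqn: equation dagger}) together with the finiteness of the normalized Rees algebra, as explained in Section 2.2 of the paper, so your final caveat is resolved affirmatively.
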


Our proofs of both theorems rely on the existence of big Cohen-Macaulay algebras. In fact, we show that the tight Hilbert coefficients $e_i^*(I)$ is a special case of what we call the BCM Hilbert coefficients $e_i^B(I)$ associated to a big Cohen-Macaulay algebra $B$, and the latter can be defined in arbitrary characteristic. In this context, we will show in Theorem \ref{theorem: Nonnegativity} that $\overline{e}_1(Q)\geq e_1^B(Q)\geq 0$ for all parameter ideals $Q\subseteq R$ when $B$ satisfies some mild assumptions. This recovers and extends the main result of \cite{GHM} in arbitrary characteristic.

Throughout this article, all rings are commutative with multiplicative identity 1. We will use $(R,\m)$ to denote a Noetherian local ring with unique maximal ideal $\m$. We refer the reader to \cite[Chapter 1-4]{BH} for basic notions such as Cohen-Macaulay rings, regular sequence, Euler characteristic, integral closure, and the Hilbert-Samuel multiplicity. We refer the reader to \cite[Section 07QS]{Sta} for the definition and basic properties of excellent rings. The paper is organized as follows. In Section 2 we collect the definitions and some basic results on big Cohen-Macaulay algebras and variants of Hilbert coefficients. In Section 3 we prove our main results and we propose some further questions.

\subsection*{Acknowledgement} This article was developed during many visits of the second author to Vietnam
Institute for Advanced Studies in Mathematics. He thanks sincerely the institute for their hospitality and valuable supports. We would like to thank Bernd Ulrich for valuable discussions, in particular for explaining to us some arguments in \cite{HU}. We would also like to thank the referee for his/her comments that lead to improvement of the paper. The first author was supported in part by NSF Grant DMS \#1901672, \#2302430, NSF FRG Grant \#1952366, and a fellowship from the Sloan Foundation. The second author is partially supported by a fund of Vietnam National Foundation for Science and Technology Development (NAFOSTED) under grant number 101.04-2023.08.

\section{Preliminaries}

Recall that an element $x$ in a ring $R$ is integral over an ideal $I\subseteq R$ if it satisfies an equation of the form $x^n+ a_1x^{n-1}+\cdots + a_{n-1}x + a_n=0$, where $a_k\in I^k$. The set of all elements integral over $I$ forms an ideal and is denoted by $\overline{I}$, called the integral closure of $I$. An ideal $I\subseteq R$ is called integrally closed if $I=\overline{I}$. It is well-known that an element $x\in R$ is integral over $I$ if and only if the image of $x$ in $R/\mathfrak{p}$ is integral over $I(R/\mathfrak{p})$ for all minimal primes $\mathfrak{p}$, see \cite[Proposition 1.1.5]{HS}.

Suppose $R$ is a Noetherian ring of prime characteristic $p>0$. The tight closure of an ideal $I\subseteq R$, introduced by Hochster--Huneke, is defined as follows:
$$I^* := \{x\in R \,\ | \,\ \text{there exists $c\in R-\cup_{\mathfrak{p}\in \text{Min}(R)}\mathfrak{p}$ such that $cx^{p^e}\in I^{[p^e]}$ for all $e\gg0$}\}.$$
An ideal $I\subseteq R$ is called tightly closed if $I=I^*$. In general, tight closure is always contained in the integral closure, that is, $I^*\subseteq\overline{I}$ (see \cite[Proposition on Page 58]{Ho07}). Similar to integral closure, an element $x\in R$ is in the tight closure of $I$ if and only if the image of $x$ in $R/\mathfrak{p}$ is in the tight closure of $I(R/\mathfrak{p})$ for all minimal primes $\mathfrak{p}$, see \cite[Theorem on page 49]{Ho07}.

Let $R$ be a Noetherian complete local domain and let $I\subseteq R$ be an ideal. The solid closure of $I$, denoted by $I^\bigstar$, consists of those element $x\in R$ such that there exists an $R$-algebra $S$ such that $\Hom_R(S, R)\neq 0$ and such that $x\in IS$. One can define solid closure of ideals in more general rings, see \cite[Definition 1.2]{Ho94}, but we will only need this notion for complete local domains. It was shown in \cite[Theorem 5.10]{Ho94} that solid closure is contained in the integral closure, i.e., $I^\bigstar\subseteq \overline{I}$. If $R$ has prime characteristic $p>0$, then solid closure agrees with tight closure $I^\bigstar = I^*$, see \cite[Theorem 8.6]{Ho94}.

\subsection{Big Cohen-Macaulay algebras}
Let $(R,\m)$ be a Noetherian local ring. An $R$-algebra $B$, not necessarily Noetherian, is called balanced big Cohen-Macaulay over $R$ if every system of parameters of $R$ is a regular sequence on $B$ and $\m B\neq B$. Balanced big Cohen-Macaulay algebras exist, in equal characteristic, this is due to Hochster-Huneke \cite{HH95}, and in mixed characteristic, this is proved by Andr\'{e} \cite{A18} (see also \cite{HM18,A20,B20}). In this article, we need to compare the closure operation induced by a balanced big Cohen-Macaulay algebra with integral closure. We begin with the following result. 

In what follows, when $R\to S$ is a (not necessarily injective) homomorphism of rings, $IS\cap R$ should be interpreted as the contraction of $IS$ to $R$. That is, those elements of $R$ whose image in $S$ are contained in $IS$. 

\begin{lemma}
\label{lemma: bigCM}
Let $(R,\m)$ be a Noetherian local ring. Then the following conditions are equivalent:
\begin{enumerate}
  \item $\widehat{R}$ is equidimensional.
  \item There exists a balanced big Cohen-Macaulay $R$-algebra $B$ such that
  \begin{equation}
  \label{eqn: equation dagger}
  I^B:= IB\cap R \subseteq \overline{I} \text{ for all $\m$-primary ideals $I\subseteq R$ }. \tag{$\dagger$}
  \end{equation}
  \item There exists a balanced big Cohen-Macaulay $R$-algebra $B$ such that $I^B\subseteq \overline{I}$ for all $I\subseteq R$.
\end{enumerate}
\end{lemma}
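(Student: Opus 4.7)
The implication $(3) \Rightarrow (2)$ is immediate, since a containment claim for all ideals specializes to $\m$-primary ones. For the other two directions:

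\emph{$(1) \Rightarrow (3)$.} I would first reduce to the case $R = \widehat{R}$. If $B$ is a balanced big Cohen-Macaulay $\widehat{R}$-algebra with $I B \cap \widehat{R} \subseteq \overline{I\widehat{R}}$ for every ideal, then $B$ is automatically a balanced big Cohen-Macaulay $R$-algebra (any system of parameters of $R$ is a system of parameters of $\widehat{R}$), and for $I \subseteq R$ one has $IB \cap R \subseteq I\widehat{R}B \cap \widehat{R} \cap R \subseteq \overline{I\widehat{R}} \cap R = \overline{I}$ by compatibility of integral closure with the faithfully flat extension $R \to \widehat{R}$. Now assume $R$ is complete and equidimensional with minimal primes $\mathfrak{p}_1, \ldots, \mathfrak{p}_k$, each with $\dim R/\mathfrak{p}_i = d$. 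Each $R/\mathfrak{p}_i$ is a complete local domain, so known existence results for balanced big Cohen-Macaulay algebras whose induced closure lies inside solid closure (Hochster--Huneke in equal characteristic, Andr\'e in mixed characteristic via perfectoidization), combined with Hochster's inclusion $I^{\bigstar} \subseteq \overline{I}$ recalled in the preliminaries, yield a balanced big Cohen-Macaulay $R/\mathfrak{p}_i$-algebra $B_i$ with $IB_i \cap (R/\mathfrak{p}_i) \subseteq \overline{I(R/\mathfrak{p}_i)}$ for all $I$. Then I would set $B := \prod_{i=1}^k B_i$. Equidimensionality ensures that every system of parameters of $R$ projects to a system of parameters of each $R/\mathfrak{p}_i$, hence is regular on each $B_i$ and on the product $B$; and $\m B \neq B$ since $\m B_i \neq B_i$ for each $i$. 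Finally, for any $r \in IB \cap R$, the image of $r$ in $R/\mathfrak{p}_i$ lies in $IB_i \cap (R/\mathfrak{p}_i) \subseteq \overline{I(R/\mathfrak{p}_i)}$ for each $i$, so the characterization of integral closure via minimal primes recalled in the preliminaries forces $r \in \overline{I}$.

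\emph{$(2) \Rightarrow (1)$.} I would argue by contrapositive. Suppose $\widehat{R}$ has a minimal prime $\mathfrak{p}$ with $\dim \widehat{R}/\mathfrak{p} = s < d$, and let $B$ be any balanced big Cohen-Macaulay $R$-algebra; the goal is to produce an $\m$-primary ideal $Q$ of $R$ and an element of $QB \cap R$ not lying in $\overline{Q}$. The key input is the associativity formula $e(Q, R) = \sum_{\dim R/P = d} \ell_{R_P}(R_P)\, e(Q, R/P)$, which is blind to the low-dimensional component at $\mathfrak{p}$, together with the fact that the big Cohen-Macaulay property of $B$ over (a suitable completion of) $\widehat{R}$ forces $\ell(R/QB \cap R)$ to register contributions from every minimal prime of $\widehat{R}$. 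Concretely, I would choose $Q$ so that $Q\widehat{R}$ contains parameters for $\widehat{R}/\mathfrak{p}$ of length $s$ together with elements of $\mathfrak{p}$, and then use the colon-capturing property of $B$ to exhibit a nonzero element of $QB \cap R$ whose image in some residue domain with full-dimensional quotient violates the integral dependence relation $\overline{Q(R/P)}$.

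\emph{Main obstacle.} The hardest step is $(2) \Rightarrow (1)$: converting the containment $I^B \subseteq \overline{I}$ on $\m$-primary ideals into equidimensionality of $\widehat{R}$ requires a delicate length-versus-multiplicity comparison. An alternative route, which I expect to be cleaner, is to invoke the classical fact that the existence of a \emph{solid} balanced big Cohen-Macaulay $R$-algebra characterizes equidimensionality of $\widehat{R}$, and to prove separately that any $B$ satisfying condition (2) must be solid; establishing this last implication is where I anticipate the main technical difficulty, likely resolved via a valuative argument in the spirit of the preliminaries on solid closure.
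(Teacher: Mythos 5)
Your implication $(1)\Rightarrow(3)$ is essentially the paper's argument: reduce to $\widehat{R}$, take a balanced big Cohen--Macaulay algebra $B_i$ over each $\widehat{R}/P_i$ (equidimensionality makes each one balanced big Cohen--Macaulay over $\widehat{R}$ itself), set $B=\prod B_i$, and check integral closure modulo each minimal prime. One small sharpening worth noting: you do not need to choose the $B_i$ from any particular construction whose closure is known to lie in solid closure --- the paper uses Hochster's fact \cite[Corollary 2.4]{Ho94} that \emph{every} balanced big Cohen--Macaulay algebra over a complete local domain is automatically a solid algebra, so $IB_i\cap(\widehat R/P_i)\subseteq (I(\widehat R/P_i))^\bigstar\subseteq \overline{I(\widehat R/P_i)}$ for any choice of $B_i$. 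Your reduction to the complete case via $\overline{I\widehat R}\cap R=\overline I$ is also as in the paper.

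The direction $(2)\Rightarrow(1)$ is a genuine gap. What you offer is a plan (a length-versus-multiplicity comparison via the associativity formula, or alternatively a solidity characterization) that you yourself flag as the main unresolved difficulty, and neither route is carried out; it is not clear how the associativity formula, which only sees the top-dimensional components, would be made to ``register contributions from every minimal prime.'' The paper's argument is much more elementary and avoids multiplicities entirely. First pass to $\widehat R$ and $\widehat B$ (note that for $\m$-primary $I$ one has $R/I\cong\widehat R/I\widehat R$ and $B/IB\cong\widehat B/I\widehat B$, so $(\dagger)$ transfers); then suppose $R$ is complete and not equidimensional, with $P_1,\dots,P_n$ the top-dimensional minimal primes and $Q_1,\dots,Q_m$ the lower-dimensional ones. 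Choose $y\in Q_1\cap\cdots\cap Q_m\setminus(P_1\cup\cdots\cup P_n)$; then $y$ is a parameter element, hence a nonzerodivisor on $B$. Since $y\cdot(P_1\cap\cdots\cap P_n)\subseteq\sqrt{0}$, some power satisfies $y^t(P_1\cap\cdots\cap P_n)^t=0$, and regularity of $y$ on $B$ forces $(P_1\cap\cdots\cap P_n)^tB=0$. Therefore $(P_1\cap\cdots\cap P_n)^t\subseteq \m^kB\cap R\subseteq\overline{\m^k}$ for every $k$ by $(\dagger)$, and $\bigcap_k\overline{\m^k}=\sqrt 0$ gives a contradiction. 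You should replace your sketch with an argument of this kind (or complete one of your proposed routes in full detail) before the proof can be considered done.
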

\begin{proof}
Since $(3)\Rightarrow(2)$ is obvious, we only need to show $(1)\Rightarrow(3)$ and $(2)\Rightarrow(1)$. Suppose $\widehat{R}$ is equidimensional and let $P_1,\dots,P_n$ be the minimal primes of $\widehat{R}$. Let $B_i$ be any balanced big Cohen-Macaulay algebra over $\widehat{R}/P_i$. Since $\widehat{R}$ is equidimensional, each system of parameters of $\widehat{R}$ is also a system of parameters of $\widehat{R}/P_i$ and thus $B_i$ is a balanced big Cohen-Macaulay algebra over $\widehat{R}$. It follows that $B:= \prod_{i=1}^{n}B_i$ is a balanced big Cohen-Macaulay algebra over $\widehat{R}$.
\begin{claim}
$(I\widehat{R})^B= IB\cap \widehat{R} \subseteq \overline{I\widehat{R}}$.
\end{claim}
\begin{proof}[Proof of Claim]
Since integral closure can be checked after modulo each minimal prime, it suffices to show that $(I\widehat{R})^B\cdot(\widehat{R}/P_i)\subseteq \overline{I(\widehat{R}/P_i)}$. It is easy to see (by our construction of $B$) that $$(I\widehat{R})^B\cdot(\widehat{R}/P_i) = (I(\widehat{R}/P_i))^{B_i}.$$
Since $B_i$ is a solid algebra over the complete local domain $\widehat{R}/P_i$ by \cite[Corollary 2.4]{Ho94}, we have
$$(I(\widehat{R}/P_i))^{B_i} \subseteq (I(\widehat{R}/P_i))^\bigstar\subseteq\overline{I(\widehat{R}/P_i)},$$
where the second inclusion follows from  \cite[Theorem 5.10]{Ho94}.
\end{proof}
By the claim above, we have
$$I^B\subseteq (I\widehat{R})^B \cap R \subseteq \overline{I\widehat{R}} \cap R=\overline{I},$$
where the last equality follows from \cite[Proposition 1.6.2]{HS}.

We next assume there exists a balanced big Cohen-Macaulay $R$-algebra $B$ that satisfies (\ref{eqn: equation dagger}). We first note that $\widehat{B}$ (the $\m$-adic completion of $B$) is still a balanced big Cohen-Macaulay algebra over $\widehat{R}$ by \cite[Corollary 8.5.3]{BH}. If $I$ is an $\frak m$-primary ideal, then we have $R/I \cong \widehat{R}/I\widehat{R} $ and $B/IB \cong \widehat{B}/I\widehat{B}$ (see \cite[Tag 05GG]{Sta}). It follows that $(I\widehat{R})^{\widehat{B}} = (I^B)\widehat{R} \subseteq \overline{I} \widehat{R} = \overline{I\widehat{R}}$ (where the last equality follows from \cite[Lemma 9.1.1]{HS}). Thus without loss of generality, we may replace $R$ by $\widehat{R}$ and $B$ by $\widehat{B}$ to assume $R$ is complete. Suppose $R$ is not equidimensional. Let $P_1, \ldots, P_n$ be all the minimal primes of $R$ such that $\dim (R/P_i) = \dim(R)$, and $Q_1, \ldots, Q_m$ be all the minimal primes of $R$ such that $\dim (R/Q_j) < d$. We pick $y \in Q_1 \cap \cdots \cap Q_m \setminus P_1\cup \cdots\cup P_n$. Then $y$ is a parameter element in $R$, and thus $y$ is a nonzerodivisor on $B$, since $B$ is balanced big Cohen-Macaulay. Since $y \cdot (P_1 \cap \cdots \cap P_n) \subseteq \sqrt{0}$, there exists $t$ such that $y^t \cdot (P_1 \cap \cdots \cap P_n)^t=0$. It follows that $(P_1 \cap \cdots \cap P_n)^tB = 0$. Hence $$(P_1 \cap \cdots \cap P_n)^t \subseteq \frak m^k B \cap R \subseteq \overline{\frak m^k}$$ for all $k$ by (\ref{eqn: equation dagger}). Thus $(P_1 \cap \cdots \cap P_n)^t\subseteq \cap_k \overline{\m^k} = \sqrt{0}$ by \cite[Exercise 5.14]{HS}, which is a contradiction. 
\end{proof}

\begin{remark}
In the proof of Lemma \ref{lemma: bigCM}, we have proved the fact that when $(R,\m)$ is a Noetherian complete local domain, then every balanced big Cohen-Macaulay algebra $B$ satisfies (\ref{eqn: equation dagger}). We suspect that when $(R,\m)$ is Noetherian, complete, reduced and equidimensional, then every balanced big Cohen-Macaulay algebra $B$ such that $\Supp(\widehat{B})=\Spec(R)$ satisfies (\ref{eqn: equation dagger}).
\end{remark}

\subsection{Hilbert coefficients}
Let $(R, \frak m)$ be a Noetherian local ring of dimension $d$ and let $I\subseteq R$ be an $\m$-primary ideal. Then for all $n \gg 0$ we have
$$\ell(R/I^{n+1}) = e_0(I) \binom{n+d}{d} - e_1(I) \binom{n+d-1}{d-1} + \cdots + (-1)^d e_d(I),$$
where $e_0(I), \cdots, e_d(I)$ are all integers, and are called the Hilbert coefficients of $I$.

Now suppose $R \oplus \overline{I}t \oplus \overline{I^2}t^2 \oplus \cdots $ is module-finite over the Rees algebra $R[It]$. For instance, by a famous result of Rees (see \cite[Corollary 9.2.1]{HS}), this is the case when $\widehat{R}$ is reduced. Then one can show that for all $n\gg0$, $\ell(R/\overline{I^{n+1}})$ agrees with a polynomial in $n$ and one can write
$$\ell(R/\overline{I^{n+1}}) = \overline{e}_0(I) \binom{n+d}{d} - \overline{e}_1(I) \binom{n+d-1}{d-1} + \cdots + (-1)^d \overline{e}_d(I),$$
where the integers $\overline{e}_0(Q), \cdots, \overline{e}_d(Q)$ are called the normal Hilbert coefficients. It is well-known that $e_0(I)=\overline{e}_0(I)$ agrees with the Hilbert-Samuel multiplicity $e(I, R)$ of $I$. 

We also recall the tight Hilbert coefficients studied in \cite{GMV}. Again, we suppose that $\widehat{R}$ is reduced and $R$ has characteristic $p>0$. Then we have 
$$\ell(R/(I^{n+1})^*) = e_0^*(I) \binom{n+d}{d} - e_1^*(I) \binom{n+d-1}{d-1} + \cdots + (-1)^d e_d^*(I),$$
for all $n\gg 0$, and the integers $e_0^*(I),\dots,e_d^*(I)$ are called the tight Hilbert coefficients, see \cite{GMV} for more details.

Now if $B$ is a balanced big Cohen-Macaulay $R$-algebra that satisfies (\ref{eqn: equation dagger}), then we know that $R\oplus I^Bt \oplus (I^2)^Bt^2 \oplus \cdots$ is an $R$-algebra that is also module-finite over $R[It]$: the fact that it is an $R$-algebra follows from the fact that $(I^a)^B(I^b)^B\subseteq (I^{a+b})^B$ for all $a, b$ (i.e., $\{(I^n)^B\}_n$ form a graded family of ideals), and that it is module-finite over $R[It]$ follows because by (\ref{eqn: equation dagger}), it is an $R[It]$-submodule of $R \oplus \overline{I}t \oplus \overline{I^2}t^2 \oplus \cdots $, and the latter is module-finite over $R[It]$ (note that $R[It]$ is Noetherian). Based on the discussion above, one can show that for all $n\gg0$, $\ell(R/(I^{n+1})^B)$ also agrees with a polynomial in $n$, and we write
$$\ell(R/(I^{n+1})^B) = e_0^B(I) \binom{n+d}{d} - e_1^B(I) \binom{n+d-1}{d-1} + \cdots + (-1)^d e_d^B(I),$$
for all $n\gg 0$ (see \cite{HM} for more general results). We call the integers $e_0^B(I),\dots, e_d^B(I)$ the BCM Hilbert coefficients with respect to $B$. It is easy to see that $e_0^B(I)=e(I, R)$ is still the Hilbert-Samuel multiplicity of $I$, and that we always have $\overline{e}_1(I)\geq e_1^B(I)\geq e_1(I)$ by comparing the coefficients of $n^{d-1}$ and noting that $I^n\subseteq(I^n)^B\subseteq \overline{I^n}$ for all $n$ by (\ref{eqn: equation dagger}).

\begin{remark}
\label{remark: tightvsBCM}
We point out that when $(R,\m)$ is excellent and $\widehat{R}$ is reduced and equidimensional of characteristic $p>0$, the tight Hilbert coefficient is a particular case of BCM Hilbert coefficient. This follows from the fact that under these assumptions, there exists a balanced big Cohen-Macaulay algebra $B$ such that $I^*=I^B$ for all $I\subseteq R$ (and any such $B$ will satisfy (\ref{eqn: equation dagger}), since tight closure is contained in the integral closure \cite[Theorem 1.3]{H98}). When $R$ is a complete local domain this is proved in \cite[Theorem on page 250]{Ho07}. In general, one can take such a $B_i$ for each complete local domain $\widehat{R}/P_i$, where $P_i$ is a minimal prime of $\widehat{R}$, and let $B=\prod B_i$. Since $R$ is excellent, $I^*\widehat{R}=(I\widehat{R})^*$ (see \cite[Proposition 1.5]{H98}) and as tight closure can be checked after modulo each minimal prime, it follows that $I^*\widehat{R}=(I\widehat{R})^B$ and thus $I^*=I^B$.
\end{remark}

Throughout the rest of this article, we will be mainly working with parameter ideals, i.e., ideals generated by a system of parameters. As we mentioned in the introduction, this will not affect the study of $\overline{e}_1(I)$, since we can often enlarge the residue field and replace $I$ by its minimal reduction.

\section{The main results}

In this section we prove our main results that $e_1^B(Q)$ (and hence $\overline{e}_1(Q)$) is always nonnegative for a parameter ideal $Q$, and that $\overline{e}_1(Q)=0$ for some parameter ideal $Q$ implies $R$ is regular.

\subsection{Non-negativity of $\bar{e}_1(Q)$ and $e_1^B(Q)$}
\begin{theorem}
\label{theorem: Nonnegativity}
Let $(R,\m)$ be a Noetherian local ring such that $\widehat{R}$ is reduced and equidimensional. Let $B$ be any balanced big Cohen-Macaulay $R$-algebra that satisfies (\ref{eqn: equation dagger}). Then for all parameter ideals $Q\subseteq R$ we have
$$\overline{e}_1(Q) \ge e_1^B(Q) \ge 0 \ge e_1(Q).$$
\end{theorem}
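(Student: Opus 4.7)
The chain of three inequalities $\overline{e}_1(Q) \ge e_1^B(Q) \ge 0 \ge e_1(Q)$ splits into independent claims, which I would address in turn.

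\textbf{The two outer inequalities.} The rightmost inequality $e_1(Q) \le 0$ is a classical fact for parameter ideals in unmixed Noetherian local rings (and the hypotheses that $\widehat R$ be reduced and equidimensional force $R$ to be unmixed). The leftmost inequality $\overline{e}_1(Q) \ge e_1^B(Q)$ follows immediately from $(\dagger)$: the containment $(Q^n)^B \subseteq \overline{Q^n}$ gives $\ell(R/\overline{Q^n}) \le \ell(R/(Q^n)^B)$ for every $n$, and since both sides agree with polynomials of degree $d$ sharing the leading coefficient $e_0(Q)$, comparison of the $\binom{n+d-1}{d-1}$-coefficients yields the claim.

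\textbf{The main inequality $e_1^B(Q) \ge 0$.} My plan is to exploit a Noether normalization together with the $A$-flatness of $B$. By the completion argument embedded in Lemma \ref{lemma: bigCM}, one may assume $R$ is complete, hence reduced, whence $(\dagger)$ gives $\ker(R \to B) = (0)^B \subseteq \sqrt{0} = 0$ so that $R \hookrightarrow B$ is injective. By equidimensionality and Cohen's structure theorem, choose a regular local subring $A \subseteq R$ with $R$ module-finite over $A$ and $\m_A R = Q$. Since $\m_A$ is generated by a regular sequence on $B$, the local criterion for flatness forces $B$ to be faithfully flat over $A$. The flat descent then gives, via the long exact sequence of $\Tor^A(-, A/\m_A^n)$ applied to $0 \to R \to B \to B/R \to 0$ together with $\Tor_1^A(B, A/\m_A^n) = 0$, the identification
\[ (Q^n)^B/Q^n \cong \Tor_1^A(B/R, A/\m_A^n). \]

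\textbf{The key obstacle.} The delicate step is converting this Tor picture into a sufficiently strong lower bound on $e_1^B(Q)$. A naive length comparison only recovers $e_1^B(Q) \ge e_1(Q)$, which is insufficient since $e_1(Q) \le 0$. My plan is instead to establish a Huckaba--Marley-type identity
\[ e_1^B(Q) = \sum_{n \ge 1} \ell\bigl((Q^n)^B / Q \cdot (Q^{n-1})^B\bigr), \]
in which each summand is nonnegative because $Q \cdot (Q^{n-1})^B \subseteq Q \cdot Q^{n-1} B \cap R = (Q^n)^B$. Classically such an identity requires the associated graded $\bigoplus_n (Q^n)^B/(Q^{n+1})^B$ to be Cohen-Macaulay; the main technical task will be to deduce this depth condition from the fact that $x_1, \ldots, x_d$ is a regular sequence on $B$, together with the $A$-flatness established above. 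Granted this identity, the nonnegativity $e_1^B(Q) \ge 0$ is immediate.
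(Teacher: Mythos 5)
Your treatment of the two outer inequalities is fine: $\overline{e}_1(Q)\ge e_1^B(Q)$ is exactly the coefficient comparison the paper records in Section 2.2, and $e_1(Q)\le 0$ is indeed known (the paper reproves it, but citing \cite{MSV} is legitimate). The gap is in the middle inequality, and it is a genuine one. The Huckaba--Marley identity $e_1(\mathcal F)=\sum_{n\ge 1}\ell(\mathcal F_n/J\mathcal F_{n-1})$ is proved in \cite{HM} only over a \emph{Cohen--Macaulay} local ring, and the equality (as opposed to the one-sided bound) requires $\depth \gr_{\mathcal F}(R)\ge d-1$. Here $R$ is only assumed reduced and equidimensional after completion; the entire content of the theorem lies in the non-Cohen--Macaulay case. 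Worse, your "main technical task" --- showing that $\bigoplus_n (Q^n)^B/(Q^{n+1})^B$ is Cohen--Macaulay --- is not merely hard but impossible in general: for a good filtration one always has $\depth \gr_{\mathcal F}(R)\le \depth R$, so Cohen--Macaulayness of this associated graded ring would force $R$ itself to be Cohen--Macaulay. No amount of regularity of $x_1,\dots,x_d$ on $B$ or flatness over $A$ can rescue this, because the obstruction lives in $R$, not in $B$. (A secondary issue: the Noether normalization with $\m_A R=Q$ exists via Cohen's structure theorem in equal characteristic, but for an arbitrary parameter ideal in mixed characteristic one cannot in general arrange $\m_A R=Q$.)

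The paper's route is quite different and avoids both problems. In place of Noether normalization it passes to $S=R[[y_1,\dots,y_d]]$ with $\fq=(y_1-x_1,\dots,y_d-x_d)$, which works in all characteristics, and it never invokes any depth property of an associated graded ring. Instead it shows $\binom{n+d}{d}e_0(Q)=e(y_1,\dots,y_d;S/\fq^{n+1})$ by a filtration and Euler-characteristic argument, bounds this multiplicity below by the colength of the limit closure $(y_1,\dots,y_d)^{\lim_{S/\fq^{n+1}}}$, and then transports that bound down to $R$ to obtain $\ell\bigl(R/\bigcap_{\alpha}Q(\alpha)^{\lim_R}\bigr)\le \binom{n+d}{d}e_0(Q)$, where the intersection runs over the "parametric" ideals $Q(\alpha)=(x_1^{\alpha_1},\dots,x_d^{\alpha_d})$ with $\sum\alpha_i=n+1$. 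The only input from $B$ is that $x_1,\dots,x_d$ is a regular sequence on $B$, which gives $Q(\alpha)^{\lim_R}\subseteq Q(\alpha)^B$ and the parametric decomposition $\bigcap_\alpha Q(\alpha)B=Q^{n+1}B$, whence $\ell(R/(Q^{n+1})^B)\le \binom{n+d}{d}e_0(Q)$ and $e_1^B(Q)\ge 0$. If you want to repair your argument you would need to replace the Huckaba--Marley step by some mechanism of this kind; as written, the proposal does not yield $e_1^B(Q)\ge 0$.
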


\begin{remark}
$\overline{e}_1(Q) \ge 0$ was the main theorem of \cite[Theorem 1.1]{GHM}, and $0 \ge e_1(Q)$ was first proved in full generality in \cite[Theorem 3.6]{MSV}. Our method gives alternative proofs, and is inspired by some work of Goto \cite{Go10} (in fact the proof that $e_1(Q)\leq 0$ via this method is due to Goto \cite{Go10}, see also \cite[Theorem 1.1]{HH11} for a generalization). 
\end{remark}

\begin{corollary}
\label{corollary: tightHilbertCoefficient}
Let $(R,\m)$ be an excellent local ring of characteristic $p>0$ such that $\widehat{R}$ is reduced and equidimensional. Then we have $e^*_1(Q)\geq 0$ for all parameter ideals $Q\subseteq R$.
\end{corollary}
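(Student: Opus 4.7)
The plan is to derive this as an immediate consequence of Theorem~\ref{theorem: Nonnegativity} by identifying the tight Hilbert coefficients with the BCM Hilbert coefficients for a suitable choice of balanced big Cohen-Macaulay algebra $B$. The machinery for this identification is exactly what Remark~\ref{remark: tightvsBCM} provides.

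First, I would invoke Remark~\ref{remark: tightvsBCM}: since $R$ is excellent and $\widehat{R}$ is reduced and equidimensional of characteristic $p > 0$, one constructs $B = \prod_i B_i$, where $B_i$ is a balanced big Cohen-Macaulay algebra over $\widehat{R}/P_i$ (with $P_i$ ranging over the minimal primes of $\widehat{R}$) such that the closure induced by $B_i$ captures tight closure over the complete local domain $\widehat{R}/P_i$. The remark shows that for this $B$ one has $I^\ast = I^B$ for every ideal $I \subseteq R$; in particular $(I^{n+1})^\ast = (I^{n+1})^B$ for every $n$. Moreover, since tight closure is contained in integral closure, this specific $B$ satisfies $(\ref{eqn: equation dagger})$.

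Next, applied to the parameter ideal $Q$, the equality $(Q^{n+1})^\ast = (Q^{n+1})^B$ for all $n$ forces the Hilbert polynomials associated with the two filtrations to coincide, and hence $e_i^\ast(Q) = e_i^B(Q)$ for $0 \le i \le d$. In particular,
\[
e_1^\ast(Q) = e_1^B(Q).
\]

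Finally, I would apply Theorem~\ref{theorem: Nonnegativity} to the chosen $B$, which yields $e_1^B(Q) \ge 0$, and combining this with the equality above gives $e_1^\ast(Q) \ge 0$, as desired. There is no genuine obstacle here: the proof is a two-line invocation of Remark~\ref{remark: tightvsBCM} and Theorem~\ref{theorem: Nonnegativity}, with all the real work already absorbed into those earlier statements (namely, the excellence hypothesis is used precisely to ensure $I^\ast \widehat{R} = (I\widehat{R})^\ast$ so that tight closure descends to a BCM closure compatible with the product construction, and the equidimensionality of $\widehat{R}$ is what makes Theorem~\ref{theorem: Nonnegativity} applicable).
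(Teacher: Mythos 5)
Your proposal is correct and matches the paper's own proof, which is exactly the one-line deduction from Remark~\ref{remark: tightvsBCM} and Theorem~\ref{theorem: Nonnegativity}; you have merely spelled out the details of that identification. No issues.
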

\begin{proof}
This follows from Theorem \ref{theorem: Nonnegativity} and Remark \ref{remark: tightvsBCM}.
\end{proof}

\begin{proof}[Proof of Theorem \ref{theorem: Nonnegativity}]
Let $Q = (x_1, \ldots, x_d) \subseteq R$. Set $S = R[[y_1, \ldots, y_d]]$ and $\frak q = (y_1 - x_1, \ldots, y_d - x_d) \subseteq S$. For all $n \ge 0$ we have $y_1, \ldots, y_d$ is a system of parameters on $S/\frak q^{n+1}$, and that
$$R/Q^{n+1} = S/(\frak q^{n+1} + (y_1, \ldots, y_d)).$$
We next note that
\begin{eqnarray*}
e_0(Q) = e(Q, R) &=& \chi(x_1, \ldots, x_d; R)\\
&=& \chi(x_1, \ldots, x_d, y_1, \ldots, y_d;S)\\
&=& \chi(y_1, \ldots, y_d, y_1-x_1, \ldots, y_d - x_d;S)\\
&=& \chi(y_1, \ldots, y_d; S/\frak q)\\
&=& e(y_1, \ldots, y_d; S/\frak q),
\end{eqnarray*}
where the equalities on the second and the fourth line follow from the fact that $y_1,\dots,y_d$ and $y_1 - x_1, \ldots, y_d - x_d$ are both regular sequences on $S$. Now since $S/\frak q^{n+1}$ has a filtration by $\binom{n+d}{d}$ copies of $S/\frak q$, by the additivity formula for multiplicity (see \cite[Theorem 11.2.3]{HS}) we have
$$e(y_1, \ldots, y_d; S/\frak q^{n+1}) = \binom{n+d}{d} e(y_1, \ldots, y_d; S/\frak q).$$
Putting these together, we have
$$\binom{n+d}{d} e_0(Q) = \binom{n+d}{d} e(y_1, \ldots, y_d; S/\frak q) = e(y_1, \ldots, y_d; S/\frak q^{n+1}), $$
and
$$\ell(R/Q^{n+1}) = \ell \left(\frac{S/\frak q^{n+1}}{(y_1, \ldots, y_d)S/\frak q^{n+1}} \right).$$
Since $y_1,\dots, y_d$ is a system of parameters of $S/\frak{q}^{n+1}$, we have
$$\ell \left(\frac{S/\frak q^{n+1}}{(y_1, \ldots, y_d)S/\frak q^{n+1}} \right) \ge e(y_1, \ldots, y_d; S/\frak q^{n+1}).$$
It follows that
$$\ell(R/Q^{n+1}) \ge \binom{n+d}{d} e_0(Q),$$
and thus $e_1(Q) \le 0$ (note that this does not require any assumption on $\widehat{R}$).

It remains to show that $e_1^B(Q) \ge 0$ (since $\overline{e}_1(Q)\geq e_1^B(Q)$ always holds, see the discussion in Section 2.2). Since $e_0^B(Q)=e_0(Q)$, it is enough to show that
\begin{equation}
\label{eqn: LengthMult}
\ell(R/(Q^{n+1})^B) \le \binom{n+d}{d} e_0(Q)
\end{equation}
for any balanced big Cohen-Macaulay algebra $B$. Below we will prove a slightly stronger result. Recall that for a parameter ideal $(z_1,\dots,z_d)$ of $R$, the limit closure is defined as $(z_1,\dots,z_d)^{\lim_R}:= \bigcup_t (z_1^{t+1},\dots,z_d^{t+1}): (z_1z_2\cdots z_d)^{t}$. The limit closure does not depend on the choice of the elements $z_1,\dots, z_d$ (i.e., it only depends on the ideal $(z_1,\dots,z_d)$). This is because $(z_1,\dots,z_d)^{\lim_R}/(z_1,\dots,z_d)$ is the kernel of the natural map $R/(z_1,\dots, z_d)\to H_\m^d(R)$.
\begin{claim}
\label{claim: LimitClosurePowers}
Set $\Lambda_{n+1}= \{(\alpha_1,\dots, \alpha_d)\in \mathbb{N}^d \,\ | \,\  \alpha_i\geq 1 \text{ and } \sum_{i=1}^{d} \alpha_i = 1+n \}$ and for each $\alpha =(\alpha_1,\dots,\alpha_d) \in \Lambda_{n+1}$, set $Q(\alpha)=(x_1^{\alpha_1},\dots,x_d^{\alpha_d})$. Then we have
$$\ell\left(R/(\bigcap_{\alpha\in\Lambda_{n+1}} Q(\alpha)^{\lim_R})\right) \le \binom{n+d}{d} e_0(Q).$$
\end{claim}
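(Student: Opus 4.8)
The plan is to transfer the whole computation to the ring $\overline{S}:=S/\fq^{n+1}$, on which $y_1,\dots,y_d$ is a system of parameters. Recall from the computation preceding the claim that $\overline{S}/(y_1,\dots,y_d)\overline{S}\cong R/Q^{n+1}$ and that $e(y_1,\dots,y_d;\overline{S})=\binom{n+d}{d}e_0(Q)$. The first ingredient I would invoke is the standard fact that, for any system of parameters $z_1,\dots,z_d$ of a Noetherian local ring $A$ of dimension $d$, the colength of the limit closure equals the multiplicity, $\ell\big(A/(z_1,\dots,z_d)^{\lim_A}\big)=e(z_1,\dots,z_d;A)$ (equivalently, the defect $\ell(A/(z_1,\dots,z_d))-e(z_1,\dots,z_d;A)$ equals $\ell\big((z_1,\dots,z_d)^{\lim_A}/(z_1,\dots,z_d)\big)$; this reduces to the Cohen--Macaulay case by induction on $d$). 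Applying it with $A=\overline{S}$ gives $\ell\big(\overline{S}/(\uy)^{\lim_{\overline{S}}}\big)=\binom{n+d}{d}e_0(Q)$.

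It then suffices to produce a surjection $\overline{S}/(\uy)^{\lim_{\overline{S}}}\twoheadrightarrow R/\bigcap_{\alpha}Q(\alpha)^{\lim_R}$. Since $(\uy)\subseteq(\uy)^{\lim_{\overline{S}}}$, the source is the quotient of $R/Q^{n+1}\cong\overline{S}/(\uy)$ by the image $\psi\big((\uy)^{\lim_{\overline{S}}}\big)$, where $\psi\colon\overline{S}\to R/Q^{n+1}$ is the canonical identification (sending $y_i\mapsto 0$). As $Q^{n+1}\subseteq\bigcap_{\alpha}Q(\alpha)^{\lim_R}$ (an elementary monomial check, using that $\alpha_i\ge 1$ and $\sum\alpha_i=n+1$), the surjection, and hence the claim, follows once I establish $\psi\big((\uy)^{\lim_{\overline{S}}}\big)\subseteq\bigcap_{\alpha}Q(\alpha)^{\lim_R}/Q^{n+1}$; that is, $\psi(f)\in Q(\alpha)^{\lim_R}$ for every $f\in(\uy)^{\lim_{\overline{S}}}$ and every $\alpha\in\Lambda_{n+1}$.

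This containment is the heart of the argument. Set $u_i=y_i-x_i$, so that $\overline{S}$ is a finite free $R$-module (via the constants $R\subseteq R[[y]]$) with basis the monomials $u^\beta$, $|\beta|\le n$, and each $u_i$ is nilpotent. Consequently $H^d_{\m}(\overline{S})\cong H^d_{\m}(R)\otimes_R\overline{S}$, and since $y_i$ and $x_i$ differ by a nilpotent they invert the same localization, which yields in local cohomology the finite expansion
\[
\frac{1}{y_1\cdots y_d}=\sum_{\underline{k}}(-1)^{|\underline{k}|}\,u^{\underline{k}}\left[\frac{1}{x_1^{k_1+1}\cdots x_d^{k_d+1}}\right].
\]
Writing $f=\sum_\beta a_\beta u^\beta$ with $a_\beta\in R$, the condition $f\in(\uy)^{\lim_{\overline{S}}}$ is exactly $f\cdot\frac{1}{y_1\cdots y_d}=0$ in $H^d_{\m}(\overline{S})$; comparing the coefficient of each basis element $u^\gamma$ turns this into one identity in $H^d_{\m}(R)$ for every $\gamma$ with $|\gamma|\le n$, namely
\[
\sum_{0\le\beta\le\gamma}a_\beta(-1)^{|\beta|}\left[\frac{1}{x^{\gamma-\beta+\mathbf{1}}}\right]=0 ,
\]
which I will call $(C_\gamma)$. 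On the other hand $\psi(f)=\sum_\beta a_\beta(-1)^{|\beta|}x^\beta$, and $\psi(f)\in Q(\alpha)^{\lim_R}$ is equivalent to $\psi(f)\cdot\left[\tfrac{1}{x^\alpha}\right]=0$ in $H^d_{\m}(R)$. Using $x^\beta\left[\tfrac{1}{x^\alpha}\right]=\left[\tfrac{1}{x^{\alpha-\beta}}\right]$ when $\beta\le\alpha-\mathbf{1}$ and $=0$ otherwise, this is literally $(C_\gamma)$ for $\gamma=\alpha-\mathbf{1}$, a legitimate index since $|\alpha-\mathbf{1}|=(n+1)-d\le n$. Hence $\psi(f)\in Q(\alpha)^{\lim_R}$ for all $\alpha\in\Lambda_{n+1}$, completing the containment.

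The main obstacle is the local-cohomology bookkeeping of the third paragraph: justifying rigorously the identification $H^d_{\m}(\overline{S})\cong H^d_{\m}(R)\otimes_R\overline{S}$ together with the nilpotent expansion of $1/(y_1\cdots y_d)$ (via the \v{C}ech complex on $y_1\cdots y_d$, which localizes the same as $x_1\cdots x_d$), and then matching indices so that the vanishing conditions $(C_\gamma)$ cutting out $(\uy)^{\lim_{\overline{S}}}$ align precisely with membership of $\psi(f)$ in each $Q(\alpha)^{\lim_R}$. The colength-equals-multiplicity input of the first paragraph is standard, but it is exactly what converts the combinatorial identity into the numerical bound $\binom{n+d}{d}e_0(Q)$.
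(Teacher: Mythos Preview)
Your overall architecture matches the paper's: reduce to $\overline S=S/\fq^{n+1}$, bound $\ell(\overline S/(\underline y)^{\lim_{\overline S}})$ by $\binom{n+d}{d}e_0(Q)$, and then build a surjection onto $R/\bigcap_\alpha Q(\alpha)^{\lim_R}$. One caution: the assertion that $\ell\big(A/(\underline z)^{\lim_A}\big)=e(\underline z;A)$ for an arbitrary system of parameters is \emph{not} a standard fact, and your sketch ``reduces to the Cohen--Macaulay case by induction'' is not a proof. What is standard (and what the paper invokes, citing \cite{MQS}) is the inequality $\ell\big(A/(\underline z)^{\lim_A}\big)\le e(\underline z;A)$, and that is all you use; so the argument survives once you downgrade the equality to an inequality.

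Where your proof genuinely diverges from the paper is in establishing the key containment $\psi\big((\underline y)^{\lim_{\overline S}}\big)\subseteq\bigcap_\alpha Q(\alpha)^{\lim_R}$. The paper argues ideal-theoretically in the ambient regular ring $S$: from $(u_1,\dots,u_d)^{n+1}\subseteq (u_1^{\alpha_1},\dots,u_d^{\alpha_d})$ it deduces membership in $(y_1,\dots,y_d,x_1^{\alpha_1},\dots,x_d^{\alpha_d})^{\lim_S}$, then uses $S=R[[y]]$ to identify this with $Q(\alpha)^{\lim_R}S+(y_1,\dots,y_d)S$. You instead compute directly in local cohomology: using that $\overline S$ is free over $R$ on the monomials $u^\beta$, you identify $H^d_\m(\overline S)\cong H^d_\m(R)\otimes_R\overline S$, expand $[1/(y_1\cdots y_d)]$ as a finite nilpotent series in the $u_i$, and read off the vanishing conditions $(C_\gamma)$ coefficient-by-coefficient. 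This is correct (your sign $(-1)^{|\beta|}$ versus the natural $(-1)^{|\gamma-\beta|}$ differ only by the harmless global factor $(-1)^{|\gamma|}$), and it has the pleasant feature of showing \emph{exactly} which condition $(C_{\alpha-\mathbf 1})$ is responsible for membership in each $Q(\alpha)^{\lim_R}$. The paper's route is shorter and avoids the \v Cech bookkeeping; yours is more explicit and would adapt more readily if one wanted to analyze when the surjection is an isomorphism (cf.\ the remark following the paper's proof).
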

\begin{proof}[Proof of Claim]Recall that we have already proved that
$$\binom{n+d}{d} e_0(Q) = e(y_1, \ldots, y_d; S/\frak q^{n+1}).$$
Moreover, we always have (for example, see \cite[Theorem 9]{MQS})
$$e(y_1, \ldots, y_d; S/\frak q^{n+1}) \geq \ell \left(\frac{S/\frak q^{n+1}}{(y_1, \ldots, y_d)^{\lim_{S/\frak q^{n+1}}}} \right).$$
Therefore it is enough to prove that
\begin{equation}
\label{eqn: limit closure}
\ell \left(\frac{S/\frak q^{n+1}}{(y_1, \ldots, y_d)^{\lim_{S/\frak q^{n+1}}}} \right) \geq \ell\left(R/(\bigcap_{\alpha\in\Lambda_{n+1}} Q(\alpha)^{\lim_R})\right).
\end{equation}
Consider $z \in S$ whose image in $S/\frak{q}^{n+1}$ is contained in $(y_1, \ldots, y_d)^{\lim_{S/\frak q^{n+1}}}$. This means there exists some $t \ge 1$ such that
\begin{eqnarray*}
(y_1 y_2\cdots y_d)^{t} z &\in & (y_1^{t+1}, \ldots, y_d^{t+1}, (y_1 - x_1, \ldots, y_d - x_d)^{n+1})\\
& \subseteq & (y_1^{t+1},  \ldots, y_d^{t+1}, (y_1 - x_1)^{\alpha_1}, \ldots, (y_d - x_d)^{\alpha_d})
\end{eqnarray*}
for each $\alpha=(\alpha_1,\dots,\alpha_d)\in\Lambda_{n+1}$. This implies
$$z \in (y_1, \ldots, y_d, (y_1 - x_1)^{\alpha_1}, \ldots, (y_d - x_d)^{\alpha_d})^{\lim_S} = (y_1, \ldots, y_d, x_1^{\alpha_1}, \ldots, x_d^{\alpha_d})^{\lim_S}.$$
But since $S = R[[y_1, \ldots, y_d]]$, it is straightforward to check that
$$(y_1, \ldots, y_d, x_1^{\alpha_1}, \ldots, x_d^{\alpha_d})^{\lim_S} = (x_1^{\alpha_1}, \ldots, x_d^{\alpha_d})^{\lim_R}S + (y_1, \ldots, y_d)S.$$
Thus if the image of $z$ is contained in $(y_1, \ldots, y_d)^{\lim_{S/\frak q^{n+1}}}$, then after modulo $(y_1, \ldots, y_d)S$, $\overline{z} \in (x_1^{\alpha_1}, \ldots, x_d^{\alpha_d})^{\lim_R}$ for each $(\alpha_1,\dots,\alpha_d)\in\Lambda_{n+1}$, i.e., $\overline{z} \in \bigcap_{\alpha\in\Lambda_{n+1}} Q(\alpha)^{\lim_R}$. It follows that the natural surjection
$$S/\frak q^{n+1} \xrightarrow{\mod(y_1, \ldots, y_d)S} R/Q^{n+1}$$
induces a surjection
$$\frac{S/\frak q^{n+1}}{(y_1, \ldots, y_d)^{\lim_{S/\frak q^{n+1}}}} \twoheadrightarrow \frac{R}{\bigcap_{\alpha\in\Lambda_{n+1}} Q(\alpha)^{\lim_R}}.$$
This clearly establishes (\ref{eqn: limit closure}) and completes the proof of claim.
\end{proof}
Finally, since $x_1,\dots, x_d$ is a regular sequence on $B$, we have $Q(\alpha)^{\lim_R}\subseteq Q(\alpha)^B$ for each $\alpha$. It follows that $\bigcap_{\alpha\in\Lambda_{n+1}} Q(\alpha)^{\lim_R} \subseteq \bigcap_{\alpha\in\Lambda_{n+1}} Q(\alpha)^B$. Now if $x \in \bigcap_{\alpha\in\Lambda_{n+1}} Q(\alpha)^B$, then we have $x \in \big(\bigcap_{\alpha\in\Lambda_{n+1}} Q(\alpha)B \big) \cap R$. But since $x_1,\dots,x_d$ is a regular sequence on $B$, it is not hard to check that $\bigcap_{\alpha\in\Lambda_{n+1}} Q(\alpha)B = Q^{n+1}B$ (see \cite[Remark 3.3]{Q22} or \cite{HRS}) and thus $x\in Q^{n+1}B \cap R = (Q^{n+1})^B$. Therefore we have $\bigcap_{\alpha\in\Lambda_{n+1}} Q(\alpha)^B = (Q^{n+1})^B$. Putting these together, we have
$$\bigcap_{\alpha\in\Lambda_{n+1}} Q(\alpha)^{\lim_R} \subseteq \bigcap_{\alpha\in\Lambda_{n+1}} Q(\alpha)^B = (Q^{n+1})^B.$$
Therefore by Claim~\ref{claim: LimitClosurePowers}, we have 
$$\ell(R/(Q^{n+1})^B) \le \binom{n+d}{d} e_0(Q)$$
as wanted.
\end{proof}

\begin{remark}
With notation as in Theorem \ref{theorem: Nonnegativity}, we do not know whether we have
$$\ell\left(\frac{S/\frak q^{n+1}}{(y_1, \ldots, y_d)^{\lim_{S/\frak q^{n+1}}}}\right) = \ell \left( \frac{R}{\bigcap_{\alpha\in\Lambda_{n+1}} Q(\alpha)^{\lim_R}}\right).$$
\end{remark}

\begin{remark}
With notation as in Claim \ref{claim: LimitClosurePowers}, fix a generating set $(x_1,\dots,x_d)$ of $Q$, one may try to define $(Q^n)^{\lim} := \bigcap_{\alpha\in\Lambda_{n}} Q(\alpha)^{\lim}$ and call this the limit closure of $Q^n$. However, it is not clear to us whether this is independent of the choice of the generators $x_1,\dots,x_d$. It is also not clear to us (even when fixing the generators $(x_1,\dots,x_d)$ of $Q$) whether $\{(Q^n)^{\lim}\}_n$ form a graded family of ideals, i.e., we do not know whether $(Q^a)^{\lim} (Q^b)^{\lim} \subseteq (Q^{a+b})^{\lim}$ for all $a,b$.
\end{remark}

\subsection{Vanishing of $\bar{e}_1(Q)$} In this subsection we prove our main result. Recall that for a finitely generated $R$-module $M$, we use the notation $\nu(M)$ to denote its minimal number of generators.

\begin{theorem}
\label{theorem: Vanishing}
Let $(R,\m)$ be a Noetherian local ring such that $\widehat{R}$ is reduced and $S_2$. If $\overline{e}_1(Q)=0$ for some parameter ideal $Q\subseteq R$, then $R$ is regular and $\nu(\m/Q)\leq 1$.
\end{theorem}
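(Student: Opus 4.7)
Reduce first to the complete case: by faithful flatness we may assume $R$ is complete, reduced and $S_2$, since the hypotheses and the conclusions ($R$ regular and $\nu(\m/Q)\le 1$) descend between $R$ and $\widehat R$. Then $R$ is equidimensional, so Theorem~\ref{theorem: Nonnegativity} yields $0=\overline e_1(Q)\ge e_1^B(Q)\ge 0$ for any balanced big Cohen--Macaulay algebra $B$ satisfying~(\ref{eqn: equation dagger}), and in particular $e_1^B(Q)=0$.

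I would next revisit the proof of Theorem~\ref{theorem: Nonnegativity} in this equality case. The chain
$$\binom{n+d}{d}e_0(Q)\ \ge\ \ell\!\left(\frac{S/\frak q^{n+1}}{(y_1,\ldots,y_d)^{\lim_{S/\frak q^{n+1}}}}\right)\ \ge\ \ell\!\left(R\Big/\bigcap_{\alpha\in\Lambda_{n+1}}Q(\alpha)^{\lim_R}\right)\ \ge\ \ell(R/(Q^{n+1})^B),$$
together with $e_0^B(Q)=e_0(Q)$ and $e_1^B(Q)=0$, forces each quantity to agree with $\binom{n+d}{d}e_0(Q)$ in its top two coefficients as a polynomial in $n$. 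The goal of this step is to upgrade the resulting asymptotic identity to genuine Cohen--Macaulayness of $R$: a nonzero $H^{i}_\m(R)$ with $i<d$ should contribute a term of order at least $n^{d-1}$ to $\binom{n+d}{d}e_0(Q)-\ell(R/(Q^{n+1})^B)$, which is incompatible with $e_1^B(Q)=0$. The $S_2$ hypothesis is exactly what is needed here, as it controls the low-depth defect that might otherwise perturb this leading correction term.

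Once $R$ is Cohen--Macaulay with $\widehat R$ reduced, the Huneke--Itoh length formula
$$\overline e_1(Q)\ =\ \sum_{n\ge 0}\ell\!\left(\overline{Q^{n+1}}\big/Q\overline{Q^n}\right)$$
combined with $\overline e_1(Q)=0$ gives $\overline{Q^{n+1}}=Q\overline{Q^n}$ for every $n\ge 0$, and iterating yields $\overline{Q^n}=Q^n$ for all $n$, i.e., $Q$ is a normal ideal. The final step is then to conclude that $R$ is regular with $\nu(\m/Q)\le 1$, generalizing the result of \cite{MTV} from the unmixed to the $S_2$ setting. I would follow the approach there: an analysis of $\gr_Q(R)$ via the Koszul/linkage techniques from \cite{HU}, combined with the $S_2$ hypothesis, should force $R$ to be regular, and the numerical bound $\nu(\m/Q)\le 1$ then falls out of the multiplicity identity $e_0(Q)=\ell(R/Q)$ together with the structure of the regular local ring.

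The principal obstacle is the Cohen--Macaulay step: translating the polynomial identity $e_1^B(Q)=0$ into honest vanishing of local cohomology requires a careful matching of each Hilbert coefficient against the corresponding local cohomology contribution, and the $S_2$ hypothesis must be used nontrivially, most likely via local duality with the canonical module of $\widehat R$. The concluding regularity statement, where normality of $Q$ alone would be insufficient (consider for example a Veronese cone) without the $S_2$ input, is a secondary difficulty requiring the Huneke--Ulrich style Koszul homology arguments referenced in the acknowledgement.
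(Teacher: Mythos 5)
Your reduction to the complete case and the deduction $e_1^B(Q)=0$ from Theorem~\ref{theorem: Nonnegativity} are fine, and your endgame in the Cohen--Macaulay case is recoverable: once $R$ is Cohen--Macaulay, $\overline{e}_1(Q)=0$ forces $Q=\overline{Q}$ (this is exactly \cite[Corollary 4.9]{HM}; the equality version of the length formula you quote requires control of the depth of the associated graded ring of the filtration, but only the inequality direction is needed here), and then the main theorem of \cite{Go} on integrally closed parameter ideals gives directly that $R$ is regular and $\nu(\m/Q)\leq 1$. No Koszul or linkage analysis of $\gr_Q(R)$ is required, and your Veronese-cone worry does not arise because $Q$ is a parameter ideal, which is the setting of Goto's theorem.

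The genuine gap is the Cohen--Macaulay step, which you yourself flag as the principal obstacle. Your proposed mechanism --- that a nonzero $H^i_\m(R)$ with $i<d$ must contribute a term of order $n^{d-1}$ to $\binom{n+d}{d}e_0(Q)-\ell(R/(Q^{n+1})^B)$ --- is not substantiated, and it is unlikely to be provable at the level of generality you are working in: the implication ``$e_1^B(Q)=0$ implies $R$ is Cohen--Macaulay'' is precisely (part of) Conjecture~\ref{conjecture: BCMrational}, which the paper leaves open and verifies only under the stronger hypothesis $e_1^B(Q)=e_1(Q)$ (Proposition~\ref{proposition: BCMrational}, via the vanishing theorem of \cite{GGH10}). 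The paper's actual proof avoids this issue entirely by an induction on dimension: one cuts by a general element $x=t_1x_1+\cdots+t_dx_d$ in $R'=R[t_1,\dots,t_d]_{\m R[t_1,\dots,t_d]}$, uses the Hong--Ulrich specialization theorem \cite{HU} to show that $\overline{e}_1$ remains zero on $R''=\widehat{R'}/x\widehat{R'}$ (this step genuinely needs the integral closure filtration, not the $B$-closure filtration), passes to the $S_2$-ification $S$ of $R''$ and shows $\overline{e}_1(QS)=0$, concludes by induction that $S$ is regular, and then uses local cohomology vanishing --- a Nakayama argument on $H^2_\m(R')$ when $d\geq 4$, and when $d=3$ a diagram chase through the big Cohen--Macaulay algebra $B/xB$ over $S$ --- to force $S/R''=0$, hence $R''$, $R'$, and $R$ regular. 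Without an argument of this kind, or a proof of Conjecture~\ref{conjecture: BCMrational}, your outline does not close.
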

\begin{proof}
We first note that if $R$ is Cohen-Macaulay, then by \cite[Corollary 4.9]{HM}, $Q$ is integrally closed.\footnote{Using the language of \cite{HM}, $\overline{e}_1(Q)=0$ in a Cohen-Macaulay ring implies that the reduction number of the filtration $\{\overline{Q^n}\}_n$ is $0$, i.e., a minimal reduction of $\overline{Q}$ is equal to $\overline{Q}$, this is saying that $Q$ is integrally closed.} But then by the main result of \cite{Go}, $R$ is regular and $\nu(\m/Q)\leq 1$.

We may assume that $R$ is complete. We use induction on $d:=\dim(R)$. If $d\leq 2$, then $R$ is Cohen-Macaulay and we are done by the previous paragraph. Now suppose $d\geq 3$ and we have established the theorem in dimension $<d$. Let $Q=(x_1,\dots,x_d)$, $R'=R[t_1,\dots,t_d]_{\m R[t_1,\dots,t_d]}$, and $x=t_1x_1+ \cdots +t_dx_d$.
\begin{claim}
We have $R'':=\widehat{R'}/x\widehat{R'}$ is reduced, equidimensional, and $S_2$ on the punctured spectrum. Moreover, we have $\overline{e}_1(QR'')=0$.
\end{claim}
\begin{proof}
This is essentially contained in \cite[Proof of Theorem 1.1]{GHM} under the assumption that $R$ is (complete and) normal. The key ingredient is \cite[Theorem 2.1]{HU}. Since \cite{HU} does not require the normal assumption, the same proof as in \cite{GHM} works in our setup. For the ease of the reader (and also because the $S_2$ on the punctured spectrum conclusion is not stated in \cite{GHM}), we give a complete and self-contained argument here.

First of all, since $R'$ is $S_2$ and $R_0$, we know that $R'/xR'$ is $S_1$ and $R_0$ (see \cite[Lemma 10]{MS22}), so $R'/xR'$ and thus $R''$ is reduced (as $R'/xR'$ is excellent). $R''$ is clearly equidimensional since $\widehat{R'}$ is so and $x$ is a parameter in $\widehat{R'}$. To see $R''$ is $S_2$ on the punctured spectrum, it is enough to show $R'/xR'$ is $S_2$ on the punctured spectrum (as $R'/xR'$ is excellent). Now we use a similar argument as in \cite[Lemma 10]{MS22} (the idea follows from \cite{Ho73}): every non-maximal $P'\in\Spec(R'/xR')$ corresponds to a prime ideal of $R'$ that contracts to a non-maximal $P\in\Spec(R)$, thus $(R'/xR')_{P'}$ is a localization of $R_P[t_1,\dots,t_d]/(t_1x_1+\cdots + t_dx_d)$, but at least one $x_i$ is invertible in $R_P$ (say $x_1$ is invertible) so the latter is isomorphic to $R_P[t_2,\dots, t_d]$, which is $S_2$ as $R_P$ is $S_2$, thus $R'/xR'$ is $S_2$ on the punctured spectrum as wanted.

It remains to show that $\overline{e}_1(QR'')=0$. By \cite[Corollary 6.8.13]{HS}, we have a short exact sequence
$$0\to R'/\overline{Q^n} \xrightarrow{\cdot x} R'/\overline{Q^{n+1}} \to R'/(x, \overline{Q^{n+1}})\to 0.$$
Since $\overline{e}_1(Q)=0$, for $n\gg0$ we have
$$\ell(R'/\overline{Q^{n+1}})=\overline{e}_0(Q)\cdot\binom{n+d}{d} + \overline{e}_2(Q)\cdot\binom{n+d-2}{d-2} + o(n^{d-2}),$$
$$\ell(R'/\overline{Q^n})=\overline{e}_0(Q)\cdot\binom{n+d-1}{d} + \overline{e}_2(Q)\cdot\binom{n+d-3}{d-2} + o(n^{d-2}).$$
It follows that
\begin{equation}\label{eqn: LengthIntClosure}
\ell(R'/(x, \overline{Q^{n+1}})) = \overline{e}_0(Q)\cdot\binom{n+d-1}{d-1} + o(n^{d-2}).
\end{equation}
We next show that for all $n\gg0$, $\overline{Q^n}(R'/xR')= \overline{Q^n(R'/xR')}$. Once this is proved, we will have $\overline{Q^n}R''= \overline{Q^nR''}$ for all $n\gg0$ by \cite[Lemma 9.1.1]{HS} and thus (\ref{eqn: LengthIntClosure}) will tell us that $$\ell(R''/\overline{Q^{n+1}R''})= \overline{e}_0(Q)\binom{n+d-1}{d-1}+o(n^{d-2}).$$
Since $x$ is a general element of $Q$, we have $\overline{e}_0(Q)=e(Q, R')=e(QR'', R'')=\overline{e}_0(QR'')$ and so the above equation implies that $\overline{e}_1(QR'')=0$ as wanted.

To show $\overline{Q^n}(R'/xR')= \overline{Q^n(R'/xR')}$ for $n\gg0$, let $\mathcal{R}'$ denote the integral closure of $R'[Qt, t^{-1}]$ inside $R'[t, t^{-1}]$. Concretely, $\mathcal{R}'$ is the $\mathbb{Z}$-graded ring such that $\mathcal{R}_n= \overline{Q^n}t^n$ for $n>0$ and $\mathcal{R}'_n=R't^{n}$ for $n\leq 0$. Consider the map
\begin{equation*}
\mathcal{R}'/(xt)\mathcal{R}' \to R'[t, t^{-1}]/(xt)R'[t, t^{-1}].
\end{equation*}
If we localize at any prime ideal $\mathcal{P}$ of $R'[Qt, t^{-1}]$ that does not contain $(Qt, t^{-1})$, then we note that $(\mathcal{R}'/(xt)\mathcal{R}')_{\mathcal{P}}$ is integrally closed inside $(R'[t, t^{-1}]/(xt)R'[t, t^{-1}])_{\mathcal{P}}$. To see this, one can ``unlocalize" the ring $R'$, and consider the integral closure of $R[t_1,\dots, t_d][Qt, t^{-1}]$ inside $R[t_1,\dots,t_d][t, t^{-1}]$, call this ring $\mathcal{R}$. If one localizes the map $\mathcal{R}/(xt)\mathcal{R}\to R[t_1,\dots,t_d][t, t^{-1}]/(xt)R[t_1,\dots,t_d][t, t^{-1}]$ at any prime ideal that does not contain $(Qt, t^{-1})$ (say it does not contain $x_1t$), then the resulting map is a localization of $R[t_2,\dots,t_d][\overline{Q^n}t^n, t^{-1}][\frac{1}{x_1t}] \to R[t_2,\dots, t_d][t, t^{-1}][\frac{1}{x_1t}]$, and the former is already integrally closed in the latter. 

Since the radical of $(Qt, t^{-1})$ is the unique homogeneous maximal ideal of $R[Qt, t^{-1}]$, it follows that $\mathcal{R}'/(xt)\mathcal{R}'$ and the integral closure of $R'[Qt, t^{-1}]/(xt)R'[Qt, t^{-1}]$ inside $R'[t, t^{-1}]/(xt)R'[t, t^{-1}]$ agree in large degree. But note that for $n>0$,
$$[\mathcal{R}'/(xt)\mathcal{R}']_n\cong \frac{\overline{Q^n}}{x\overline{Q^{n-1}}}\cdot t^n\cong \frac{\overline{Q^n}}{x(\overline{Q^n}: x)}\cdot t^n\cong \frac{\overline{Q^n}}{(xR')\cap \overline{Q^n}}\cdot t^n\cong  \overline{Q^n}(R'/xR')\cdot t^n,$$
where we have used \cite[Corollary 6.8.13]{HS} again, while the degree $n$ part of the integral closure of $R'[Qt, t^{-1}]/(xt)R'[Qt, t^{-1}]$ inside $R'[t, t^{-1}]/(xt)R'[t, t^{-1}]$ is $\overline{Q^n(R'/xR')}\cdot t^n$. Thus the fact that they agree in degree $n\gg0$ is precisely saying that $\overline{Q^n}(R'/xR')= \overline{Q^n(R'/xR')}$ for $n\gg0$.
\end{proof}

Now we come back to the proof of the theorem. Let $S$ be the $S_2$-ification of $R''$. We have a short exact sequence
$$0\to R'' \to S \to S/R'' \to 0$$
such that $S/R''$ has finite length (since $R''$ is $S_2$ on the punctured spectrum). Also note that $(S,\n)$ is (complete) local by \cite[Proposition (3.9)]{HH94} and that $S$ is reduced (since $S$ is a subring of the total quotient ring of $R''$). Since $R''\to S$ is an integral extension, we have $\overline{IS}\cap R'' =\overline{I}$ for every ideal $I\subseteq R''$ by \cite[Proposition 1.6.1]{HS}. It follows that
$\ell_{R''}(S/\overline{Q^nS})\geq \ell_{R''}(R''/\overline{Q^nR''})$ for all $n\geq 0$. Thus for $n\gg0$ we have
\begin{eqnarray*}
    &&  \overline{e}_0(QR'')\binom{n+d}{d} -\overline{e}_1(QR'')\binom{n+d-1}{d-1} + o(n^{d-1})\\
   &=& \ell_{R''}(R''/\overline{Q^{n+1}R''}) \\
   &\leq&  \ell_{R''}(S/\overline{Q^{n+1}S}) \\
   &=& [S/\n: R/\m]\cdot \ell_S(S/\overline{Q^{n+1}S})\\
   &=& [S/\n: R/\m]\cdot \left(\overline{e}_0(QS)\binom{n+d}{d} -\overline{e}_1(QS)\binom{n+d-1}{d-1}+o(n^{d-1})\right).
\end{eqnarray*}
Since $S$ is a rank one module over $R''$, we also know that
$$\overline{e}_0(QR'')=e(QR'', R'') = [S/\n: R/\m] \cdot e(QS, S)=[S/\n: R/\m] \cdot \overline{e}_0(QS),$$
where the second equality is the projection formula for Hilbert-Samuel multiplicity (which can be seen by combining \cite[Theorem 11.2.4 and Theorem 11.2.7]{HS}).
Putting these together we have
$$ [S/\n: R/\m] \cdot \overline{e}_1(QS)\leq \overline{e}_1(QR'')=0.$$
But since $\overline{e}_1(QS)\geq 0$ by \cite[Theorem 1.1]{GHM} (see Theorem \ref{theorem: Nonnegativity}), we must have $\overline{e}_1(QS)=0$. Now $(S,\n)$ is a reduced complete local ring that is $S_2$ and $\dim(S)=d-1$, such that $\overline{e}_1(QS)=0$. By our inductive hypothesis, we know that $S$ is regular. But since $S/R''$ has finite length, by the long exact sequence of local cohomology induced by
$0\to R'' \to S \to S/R'' \to 0$, we obtain that
$$H_\m^i(R'')=0 \text{ for all $i<\dim(R'')$ and $i\neq 1$, and } H_\m^1(R'')\cong S/R''.$$
At this point, we consider the long exact sequence of local cohomology induced by $0\to \widehat{R'} \xrightarrow{\cdot x} \widehat{R'} \to R''\to 0$, we get
$$0=H_\m^1(R')\to H_\m^1(R'') \to H_\m^2(R')\xrightarrow{\cdot x} H_\m^2(R') \to H_\m^2(R'') \to \cdots.$$

If $d\geq 4$, then $\dim(R'')\geq 3$ and thus $H_\m^2(R'')=0$. Since $\widehat{R'}$ is $S_2$, $H_\m^2(R')$ has finite length and the above exact sequence tells us that $H_\m^2(R')=0$ by Nakayama's lemma. But then by the above exact sequence again, we have $H_\m^1(R'')=0$ and hence $S/R''=0$. Thus $R''\cong S$ is regular. But then $\widehat{R'}$ and hence $R$ is regular as wanted.

Finally, suppose $d=3$. Let $B$ be a balanced big Cohen-Macaulay algebra of $\widehat{R'}$ that is $\m$-adic complete, then $B/xB$ is a balanced big Cohen-Macaulay algebra of $R''$. It follows that the canonical map $R''\to B/xB$ factors through $S$.
\begin{claim}
$B/xB$ is a balanced big Cohen-Macaulay algebra over $S$.
\end{claim}
\begin{proof}[Proof of Claim]
It is clear that some system of parameters of $S$ (namely those coming from $R''$) are regular sequences on $B/xB$. To see that every system of parameters of $S$ is a regular sequence on $B/xB$, we first note that $B/xB$ is $\m$-adically complete: since $B$ is $\m$-adic complete, $B/xB$ is derived $\m$-complete by \cite[Tag 091U]{Sta}, take $(y,z)$ that is a system of parameters of $R''$, then as $y,z$ is a regular sequence on $B/xB$, the derived completion with respect to $(y,z)$, which is $B/xB$ itself, agrees with the usual completion with respect to $(y,z)$ by \cite[Tag 0920]{Sta} (equivalently, with respect to $\m$ as $\sqrt{(y,z)}=\m$). Hence by \cite[Corollary 8.5.3]{BH}, every system of parameters of $S$ is a regular sequence on $\widehat{B/xB}\cong B/xB$.
\end{proof}   
Note that $\dim(R'')=\dim(S)=2$ and $S$ is regular, thus the long exact sequence of local cohomology induced by
$0\to R'' \to S \to S/R'' \to 0$ implies that $H_\m^2(R'')\cong H_\m^2(S)$. Hence we have the following commutative algebra:
\[\xymatrix{
&& H_\m^2(S) \ar@{=}[d]& &&\\
H_\m^2(R') \ar[r]^{\cdot x} & H_\m^2(R') \ar[r] \ar[d] & H_\m^2(R'') \ar[r] \ar@{^{(}->}[d]^\phi & H_\m^3(R') \ar[d] \ar[r] & H_\m^3(R')\ar[d] \ar[r] & 0\\
& 0= H_\m^2(B) \ar[r] & H_\m^2(B/xB) \ar[r] & H_\m^3(B) \ar[r]  & H_\m^3(B)  \ar[r] & 0
}
\]
where the injectivity of $\phi$ follows from the fact that $B/xB$ is a balanced big Cohen-Macaulay algebra over $S$ and thus faithfully flat over $S$ (as $S$ is regular). Chasing this diagram we find that the map $H_\m^2(R')\xrightarrow{\cdot x} H_\m^2(R')$ is surjective. But since $\widehat{R'}$ is $S_2$, $H_\m^2(R')$ has finite length, thus $H_\m^2(R')=0$ by Nakayama's lemma. Hence $\widehat{R'}$ is Cohen-Macaulay and thus $R''$ is also Cohen-Macaulay. But then $R''\cong S$ and so $R''$ is regular and thus $\widehat{R'}$ is regular. Thus $R$ is regular as wanted.

Now we have established that $R$ is regular, we can repeat the argument in the first paragraph of the proof to show that $\nu(\m/Q)\leq 1$ (essentially, this follows from the main result of \cite{Go}).
\end{proof}

As a consequence, we answer the problem raised in \cite[Section 3]{GHM} for excellent rings.

\begin{corollary}
Let $R$ be an excellent local ring such that $\widehat{R}$ is reduced and equidimensional. Suppose $I\subseteq R$ is an $\m$-primary ideal such that $\overline{e}_1(I)=0$. Then $R^{\emph{N}}$, the normalization of $R$, is regular and $IR^\emph{N}$ is normal (i.e., all powers of $IR^\emph{N}$ are integrally closed in $R^\emph{N}$).
\end{corollary}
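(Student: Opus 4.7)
The plan is to reduce the statement to Theorem~\ref{theorem: Vanishing} applied at each maximal ideal of the normalization. After extending scalars along the faithfully flat local map $R\to R(t):=R[t]_{\m R[t]}$ (which preserves excellence, reducedness and equidimensionality of the completion, and the hypothesis $\overline{e}_1(IR(t))=0$; since $R(t)^{\emph{N}}=R^{\emph{N}}\otimes_R R(t)$, faithful flatness lets the conclusions descend), I may assume the residue field of $R$ is infinite. Then $I$ admits a minimal reduction $Q\subseteq I$ that is a parameter ideal, and since $Q^n$ is a reduction of $I^n$ for every $n$ we have $\overline{Q^n}=\overline{I^n}$; in particular $\overline{e}_1(Q)=\overline{e}_1(I)=0$.

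Set $S:=R^{\emph{N}}$. Excellence of $R$ makes $S$ a module-finite extension of $R$, semi-local with maximal ideals $\m_1,\dots,\m_s$, and equidimensionality of $\widehat{R}$ forces $\dim S_{\m_k}=d:=\dim R$ for every $k$. Each $S_{\m_k}$ is a normal excellent local domain, so $\widehat{S_{\m_k}}$ is normal and in particular reduced and $S_2$. Since $R\hookrightarrow S$ is integral, \cite[Proposition 1.6.1]{HS} gives $\overline{Q^{n+1}S}\cap R=\overline{Q^{n+1}}$, yielding the short exact sequence of finite-length $R$-modules
\[
0\to R/\overline{Q^{n+1}}\to S/\overline{Q^{n+1}S}\to S/(R+\overline{Q^{n+1}S})\to 0.
\]
Using $\ell_R(M)=\sum_k[\kappa(\m_k):\kappa(\m)]\,\ell_{S_{\m_k}}(M_{\m_k})$ for finite-length $S$-modules $M$, the middle term is eventually a polynomial in $n$ of degree $d$; by the additivity formula for multiplicity, together with birationality of $R\hookrightarrow S$ on each minimal prime and equidimensionality of $\widehat{R}$, its leading term matches that of $\ell_R(R/\overline{Q^{n+1}})$, namely $\overline{e}_0(Q)\binom{n+d}{d}$. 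Setting $E_1:=\sum_k[\kappa(\m_k):\kappa(\m)]\,\overline{e}_1(QS_{\m_k})$ and using $\overline{e}_1(Q)=0$, subtraction gives
\[
\ell_R\bigl(S/(R+\overline{Q^{n+1}S})\bigr)=-E_1\binom{n+d-1}{d-1}+O(n^{d-2})\ge 0,
\]
forcing $E_1\le 0$. Since Theorem~\ref{theorem: Nonnegativity} applied to each normal local $S_{\m_k}$ yields $\overline{e}_1(QS_{\m_k})\ge 0$, each $\overline{e}_1(QS_{\m_k})$ must vanish.

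Theorem~\ref{theorem: Vanishing} now applies to each $(S_{\m_k},\m_k S_{\m_k})$ with the parameter ideal $QS_{\m_k}$, giving $S_{\m_k}$ regular and $\nu(\m_k S_{\m_k}/QS_{\m_k})\le 1$. Hence $R^{\emph{N}}$ is regular. In each $S_{\m_k}$ the bound on $\nu$ lets one choose a regular system of parameters $x_1,\dots,x_d$ so that $QS_{\m_k}=(x_1,\dots,x_{d-1},x_d^a)$ for some $a\ge 1$, and a standard weighted-order/Rees-valuation computation shows every power of this ideal is integrally closed. Since $Q$ is a reduction of $I$ in $R$, $QS$ is a reduction of $IS$ in $S$, so $\overline{(IS)^n}=\overline{(QS)^n}=(QS)^n\subseteq (IS)^n\subseteq\overline{(IS)^n}$ locally at each $\m_k$, forcing $(IR^{\emph{N}})^n=Q^nR^{\emph{N}}$ to be integrally closed for every $n$, i.e., $IR^{\emph{N}}$ is normal.

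The main obstacle I expect is checking that the two normal Hilbert polynomials attached to $R$ and to $S$ share the same leading term: because the cokernel $S/R$ can have dimension as large as $d-1$, this matching is not automatic, and one must invoke the additivity of multiplicity together with the birationality of $R\hookrightarrow S$ on minimal primes and the equidimensionality of $\widehat{R}$ to obtain $e(Q,R)=e(Q,S)$. Once that comparison is established, the rest (the non-negativity argument, and the fact that $(x_1,\dots,x_{d-1},x_d^a)$ is a normal ideal in a regular local ring) is essentially formal.
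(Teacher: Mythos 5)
Your argument is correct and follows essentially the same route as the paper: reduce to an infinite residue field, pass to a minimal reduction $Q$, compare the normal Hilbert polynomial of $R$ with that of a module-finite birational overring via the rank-one projection formula for multiplicities, use Theorem \ref{theorem: Nonnegativity} to force $\overline{e}_1(QS_{\m_k})=0$ at each maximal ideal, and then invoke Theorem \ref{theorem: Vanishing}. The only real deviation is that you pass directly to the normalization $R^{\mathrm{N}}$, whereas the paper passes to the $S_2$-ification $S$ of $\widehat{R}$ and only identifies it with the normalization at the very end once regularity is known; your shortcut is legitimate because each local ring of $R^{\mathrm{N}}$ is an excellent normal local domain, so its completion is normal (hence reduced and $S_2$) and Theorem \ref{theorem: Vanishing} applies, and your ``main obstacle'' (matching leading terms) is exactly the paper's projection-formula step $\overline{e}_0(Q)=\sum_k[\kappa(\m_k):\kappa(\m)]\,\overline{e}_0(QS_{\m_k})$.
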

\begin{proof}
Replacing $R$ by $R[t]_{\m R[t]}$, we may assume that the residue field of $R$ is infinite (we leave it to the readers to check that the hypotheses and conclusions are stable under such a base change). Let $S$ be the $S_2$-ification of $R$. We will show that the $\m$-adic completion of $\widehat{S}$ is regular. Since $R$ is excellent, $\widehat{S}$ agrees with the $S_2$-ification of $\widehat{R}$ by \cite[Proposition 3.8]{HH94}. Thus $\widehat{S}$ is semilocal, reduced, and $S_2$. Since $\overline{J\widehat{S}}\cap \widehat{R}=\overline{J}$ for every $\m$-primary ideal $J\subseteq \widehat{R}$ by \cite[Proposition 1.6.1]{HS}, we have $\ell_{\widehat{R}}(\widehat{R}/\overline{J})\leq \ell_{\widehat{R}}(\widehat{S}/\overline{J\widehat{S}})$.

Let $\n_1,\dots, \n_s$ be the maximal ideals of $\widehat{S}$ and let $S_i:= (\widehat{S})_{\n_i}$ (in fact, since $\widehat{S}$ is complete, we have $\widehat{S}\cong \prod_{i=1}^{s}S_i$, and each $S_i$ is complete local, reduced, and $S_2$). Then we have 

\begin{eqnarray*}
    &&  \overline{e}_0(I)\binom{n+d}{d} -\overline{e}_1(I)\binom{n+d-1}{d-1} + o(n^{d-1})\\
   &=& \ell_R(R/\overline{I^{n+1}})=\ell_{\widehat{R}}(\widehat{R}/\overline{I^{n+1}\widehat{R}}) \\
   &\leq&  \ell_{\widehat{R}}(\widehat{S}/\overline{I^{n+1}\widehat{S}}) \\
   &=& \sum_{i=1}^{s}[S_i/\n_i: R/\m]\cdot \ell_{S_i}(S_i/\overline{I^{n+1}S_i})\\
   &=& \sum_{i=1}^{s}[S_i/\n_i: R/\m]\cdot \left(\overline{e}_0(IS_i)\binom{n+d}{d} -\overline{e}_1(IS_i)\binom{n+d-1}{d-1}+o(n^{d-1})\right),
\end{eqnarray*}
where we have used \cite[Lemma 9.1.1]{HS} for the equality in the second line. Since $\widehat{S}$ is a rank one module over $\widehat{R}$, we also know that
$$\overline{e}_0(I)=e(I\widehat{R}, \widehat{R}) = \sum_{i=1}^{s}[S_i/\n_i: R/\m] \cdot e(IS_i, S_i)=\sum_{i=1}^{s}[S_i/\n_i: R/\m] \cdot\overline{e}_0(IS_i),$$
where we have used the projection formula for the Hilbert-Samuel multiplicity (see \cite[Theorem 11.2.4 and Theorem 11.2.7]{HS}).
The above inequality implies that
$$\sum_{i=1}^{s}[S_i/\n_i: R/\m] \cdot\overline{e}_1(IS_i) \leq \overline{e}_1(I)=0.$$
But since $\overline{e}_1(IS_i)\geq 0$ by \cite[Theorem 1.1]{GHM}, we must have $\overline{e}_1(IS_i)=0$ for all $i$. Let $Q$ be a minimal reduction of $I$ (note that $Q$ is a parameter ideal of $R$, since we have reduced to the case that $R$ has an infinite residue field). It follows that $\overline{e}_1(QS_i)=0$ and thus by Theorem \ref{theorem: Vanishing}, $S_i$ is regular and $\nu(\n_i/Q)\leq 1$. But then $QS_i$ is normal in $S_i$. It follows that $\widehat{S}\cong \prod_{i=1}^{s}S_i$ is regular,  $Q\widehat{S}$ is normal in $\widehat{S}$ and in particular, $Q\widehat{S}=I\widehat{S}$.

Since $S\to \widehat{S}\cong \widehat{R}\otimes_RS$ is faithfully flat with geometrically regular fibers (as $R$ is excellent). We have $S$ is regular and $QS=IS$ is normal in $S$ by \cite[Theorem 19.2.1]{HS}. Finally, since $S$ is regular, $S$ agrees with the normalization $R^\text{N}$ of $R$.
\end{proof}

\begin{remark}
\label{remark: S_2necessary}
The condition $\widehat{R}$ is $S_2$ cannot be dropped in Theorem \ref{theorem: Vanishing}. This was already observed in \cite[Section 3]{GHM}. We give a different example that is a complete local domain. Let $R=k[[x,xy,y^2, y^3]]$ where $k$ is a field. Then the $S_2$-ification of $R$ is $S=k[[x,y]]$ and we have $0\to R\to S\to S/R\cong k\cdot \overline{y}\to 0$. Let $Q=(x, y^2)\subseteq R$ and we claim that $\overline{e}_1(Q)=0$. To see this, note that $QS=(x,y^2)\subseteq S$ is normal and $\ell(S/Q^{n+1}S)=2\cdot \binom{n+2}{2}$. It follows from the short exact sequence
$$0\to R/\overline{Q^{n+1}}\to S/Q^{n+1}S \to k\to 0$$
that $\ell(R/\overline{Q^{n+1}})=2\cdot \binom{n+2}{2} -1$. In particular, $\overline{e}_1(Q)=0$.
\end{remark}

Recall that a Noetherian local ring $(R,\m)$ of prime characteristic $p>0$ is called $F$-rational if every ideal generated by a system of parameters is tightly closed. It was mentioned in \cite{DQV} that Huneke asked that when $\widehat{R}$ is reduced and equidimensional of prime characteristic $p>0$, whether $e_1^*(Q)=0$ for some system of parameters $Q\subseteq R$ implies $R$ is $F$-rational. In general, counter-examples to the question were constructed in \cite[Example 5.4 and 5.5]{DQV} (in fact, the example in Remark \ref{remark: S_2necessary} is a counter-example that is a complete local domain). However, all these examples do not satisfy Serre's $S_2$ condition. 

Let $(R,\m)$ be a Noetherian local ring and let $B$ be a big Cohen-Macaulay $R$-algebra. Recall that $R$ is called ${\rm{BCM}}_B$-rational if $R$ is Cohen-Macaulay and the natural map $H_\m^d(R)\to H_\m^d(B)$ is injective, where $d=\dim(R)$. If $R$ is an excellent local ring of prime characteristic $p>0$, then $R$ is $F$-rational if and only if $R$ is ${\rm{BCM}}_B$-rational for all big Cohen-Macaulay algebra $B$, see \cite[Proposition 3.5]{MS}.

We propose the following conjecture relating the vanishing of $e_1^B(Q)$ and ${\rm{BCM}}_B$-rational singularities, which modifies Huneke's question and makes sense in all characteristics.

\begin{conjecture}
\label{conjecture: BCMrational}
Let $(R,\m)$ be a Noetherian local ring such that $\widehat{R}$ is reduced and $S_2$. Let $B$ be a balanced big Cohen-Macaulay $R$-algebra that satisfies (\ref{eqn: equation dagger}). If $e_1^B(Q) = 0$ for some parameter ideal $Q\subseteq R$, then $R$ is ${\rm{BCM}}_B$-rational.

In particular, if $R$ is excellent and has characteristic $p>0$ (such that $\widehat{R}$ is reduced and $S_2$), and $e_1^*(Q) = 0$ for some parameter ideal $Q\subseteq R$, then $R$ is F-rational.
\end{conjecture}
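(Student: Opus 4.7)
My plan is to mirror the strategy of Theorem~\ref{theorem: Vanishing}, substituting the filtration $\{(Q^n)^B\}_n$ for the integral-closure filtration $\{\overline{Q^n}\}_n$ throughout, and replacing the endpoint ``$R$ is regular'' by ``$R$ is ${\rm{BCM}}_B$-rational''. The first step is to dispose of the Cohen-Macaulay base case: if $R$ is Cohen-Macaulay and $e_1^B(Q)=0$, I would show $(Q)^B = Q$ by a BCM analog of \cite[Corollary 4.9]{HM} applied to the graded family $\{(Q^n)^B\}_n$, which by (\ref{eqn: equation dagger}) sits between $\{Q^n\}$ and $\{\overline{Q^n}\}$ and is a good $Q$-filtration when it is multiplicative. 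Since $x_1,\dots,x_d$ is a regular sequence on $B$, the equality $Q^B=Q$ should propagate via colon capturing to $(x_1^t,\dots,x_d^t)^B = (x_1^t,\dots,x_d^t)$ for all $t$, which is equivalent to the injectivity of $H_\m^d(R)\to H_\m^d(B)$ and hence to ${\rm{BCM}}_B$-rationality.

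Next I would complete $R$ and induct on $d=\dim R$, with $d\le 2$ handled by the $S_2$ hypothesis together with the previous step. For $d\ge 3$, set $R'=R[t_1,\dots,t_d]_{\m R[t_1,\dots,t_d]}$, $x=\sum t_ix_i$, and $R''=\widehat{R'}/x\widehat{R'}$. The Claim inside the proof of Theorem~\ref{theorem: Vanishing} already shows $R''$ is reduced, equidimensional and $S_2$ on the punctured spectrum; I would upgrade the Rees-algebra piece of that proof from integral closure to BCM closure to obtain the asymptotic identity $(Q^n)^B(R'/xR')=(Q^nR'')^{B/xB}$ for $n\gg 0$. Combined with the short exact sequence $0\to R'/(Q^n)^B\xrightarrow{\cdot x} R'/(Q^{n+1})^B\to R'/(x,(Q^{n+1})^B)\to 0$ and the hypothesis $e_1^B(Q)=0$, this will force $e_1^{B/xB}(QR'')=0$. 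Passing to the $S_2$-ification $(S,\n)$ of $R''$, which inherits a balanced big Cohen-Macaulay algebra from $B/xB$ (as in the Claim near the end of the proof of Theorem~\ref{theorem: Vanishing}) and still satisfies (\ref{eqn: equation dagger}) since integral closure is preserved under $R''\hookrightarrow S$, a length comparison via the projection formula together with the non-negativity from Theorem~\ref{theorem: Nonnegativity} should yield $e_1^{B/xB}(QS)=0$. By induction $S$ is ${\rm{BCM}}_{B/xB}$-rational, so in particular $S$ is Cohen-Macaulay and $H_\n^{d-1}(S)\hookrightarrow H_\n^{d-1}(B/xB)$.

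The proof would then close exactly as in the last two paragraphs of the proof of Theorem~\ref{theorem: Vanishing}, via the long exact sequences in local cohomology attached to $0\to R''\to S\to S/R''\to 0$ (with $S/R''$ of finite length) and to $0\to \widehat{R'}\xrightarrow{\cdot x}\widehat{R'}\to R''\to 0$. The injection $H_\n^{d-1}(S)\hookrightarrow H_\n^{d-1}(B/xB)$, chased through the commutative diagram against the corresponding sequences for $B$, should simultaneously promote $\widehat{R'}$ from $S_2$ to Cohen-Macaulay (via Nakayama on the finite-length modules $H_\m^i(\widehat{R'})$, $1<i<d$, exactly as in the $d=3$ argument there) and produce the injection $H_\m^d(\widehat{R'})\hookrightarrow H_\m^d(B)$, giving ${\rm{BCM}}_B$-rationality of $R$. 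The second, $F$-rational half of the conjecture then falls out of Remark~\ref{remark: tightvsBCM} together with the equivalence between $F$-rationality and universal ${\rm{BCM}}_B$-rationality recalled right before the conjecture.

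I expect two genuinely delicate obstacles. The first is the Cohen-Macaulay base case: to run the reduction-number argument of \cite[Corollary 4.9]{HM} one needs $\{(Q^n)^B\}_n$ to be a good enough $Q$-filtration, and it is not clear a priori that this filtration is multiplicative or admits the Rees-algebra machinery used in that reference; this is closely related to the open question raised right after Claim~\ref{claim: LimitClosurePowers}. The second, and likely harder, is establishing the asymptotic compatibility $(Q^n)^B(R'/xR')=(Q^nR'')^{B/xB}$ for $n\gg 0$, since BCM closure does not enjoy the clean integral-closure-of-Rees-algebras formalism exploited in Theorem~\ref{theorem: Vanishing}; a plausible workaround is to replace the exact ideal equality by an asymptotic length estimate, and this is precisely the sort of input that Conjecture~\ref{conjecture: BCMrational} is isolating.
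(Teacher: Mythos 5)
This statement is labeled a \emph{Conjecture} in the paper and is left open there: the authors prove only the special cases recorded in Proposition~\ref{proposition: BCMrational} (under the stronger hypothesis $e_1^B(Q)=e_1(Q)$) and in the remark following it (when $R$ is already Cohen--Macaulay). Your proposal is therefore not being measured against a proof in the paper but against an open problem, and as written it is a program rather than a proof. To give you credit where it is due: your first ``delicate obstacle'' is not actually an obstacle. Section~2.2 already establishes that $\{(Q^n)^B\}_n$ is a multiplicative filtration whose Rees module is finite over $R[Qt]$, so the machinery of \cite{HM} applies verbatim; combined with the socle diagram chase in the proof of Proposition~\ref{proposition: BCMrational}, this settles the Cohen--Macaulay base case exactly as you outline.

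The genuine gap is the inductive step, and your second ``obstacle'' understates how much is missing. The paper's proof of Theorem~\ref{theorem: Vanishing} transfers $\overline{e}_1(Q)=0$ to $\overline{e}_1(QR'')=0$ by combining the superficiality statement $\overline{Q^{n+1}}:x=\overline{Q^n}$ from \cite[Corollary 6.8.13]{HS} with the Rees-algebra/\cite{HU} argument identifying $\overline{Q^n}(R'/xR')$ with $\overline{Q^n(R'/xR')}$ for $n\gg 0$. For the BCM filtration you would need (a) $(Q^{n+1})^B:_{R'}x=(Q^n)^B$ for $n\gg 0$, (b) $(Q^n)^B(R'/xR')=(Q^nR'')^{B/xB}$ for $n\gg 0$, and (c) that $B/xB$ satisfies (\ref{eqn: equation dagger}) over $R''$ and over its $S_2$-ification $S$ (needed both to invoke Theorem~\ref{theorem: Nonnegativity} for $QS$ and to run the induction); note that Remark~2.2 only guarantees (\ref{eqn: equation dagger}) for an \emph{arbitrary} balanced big Cohen--Macaulay algebra when the base is a complete local \emph{domain}, and $R''$ and $S$ need not be domains. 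None of (a)--(c) is supplied by your sketch, and no analog of the integral-closure-of-Rees-algebras formalism is currently available for BCM closures; your suggested ``asymptotic length estimate'' workaround is, as you yourself observe, precisely the content the conjecture is isolating. So the proposal does not establish the statement.
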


We have the following partial result towards the Conjecture \ref{conjecture: BCMrational}, which is an analog of the main result of \cite{MTV}.

\begin{proposition}
\label{proposition: BCMrational}
Let $(R, \frak m)$ be a Noetherian local ring such that $\widehat{R}$ is reduced and equidimensional. Let $B$ be a balanced big Cohen-Macaulay $R$-algebra that satisfies (\ref{eqn: equation dagger}). If $e_1^B(Q) = e_1(Q)$ for some parameter ideal $Q\subseteq R$, then $R$ is ${\rm{BCM}}_B$-rational.

In particular, if $R$ is excellent and has characteristic $p>0$, and $e_1^*(Q) = e_1(Q)$ for some parameter ideal $Q\subseteq R$, then $R$ is F-rational.
\end{proposition}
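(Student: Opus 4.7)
The plan is to first use Theorem \ref{theorem: Nonnegativity} to force both $e_1^B(Q)$ and $e_1(Q)$ to vanish and deduce that $R$ is Cohen--Macaulay, then show $(Q^n)^B = Q^n$ for every $n$ by an $S_2$-ification argument on the Rees algebra, and finally use an elementary colon-ideal induction to extend this equality to all ideals of the form $(x_1^{t_1}, \ldots, x_d^{t_d})$, which will give the desired injectivity $H_\m^d(R) \hookrightarrow H_\m^d(B)$.

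\medskip

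Since $e_1^B(Q) \ge 0 \ge e_1(Q)$ by Theorem \ref{theorem: Nonnegativity}, the hypothesis $e_1^B(Q) = e_1(Q)$ forces $e_1^B(Q) = e_1(Q) = 0$. Because $\widehat{R}$ is reduced and equidimensional (hence unmixed), a classical result (for instance \cite[Theorem 3.6]{MSV} combined with the characterization of Cohen--Macaulayness by the vanishing of $e_1$ of a parameter ideal) gives that $R$ is Cohen--Macaulay; in particular $Q = (x_1, \ldots, x_d)$ is generated by a regular sequence and $\ell(R/Q^{n+1}) = e_0(Q)\binom{n+d}{d}$ exactly. Introduce the Rees algebra $\mathcal{R} := R[Qt]$, which is Cohen--Macaulay of dimension $d+1$ (hence $S_2$) since $Q$ is generated by a regular sequence, and $\mathcal{R}^B := \bigoplus_{n \ge 0} (Q^n)^B t^n$, which by Section~2.2 is a finite integral extension of $\mathcal{R}$ contained in $\overline{\mathcal{R}}$. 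The assumption $e_1^B(Q) = 0$ yields $\ell_R\bigl((\mathcal{R}^B/\mathcal{R})_n\bigr) = \ell((Q^n)^B/Q^n) = O(n^{d-2})$, so the graded $\mathcal{R}$-module $M := \mathcal{R}^B/\mathcal{R}$ has Krull dimension at most $d-1$; hence $\Supp_{\mathcal{R}}(M)$ lies in primes of height $\ge 2$. Because $\mathcal{R}$ is reduced and $S_2$, it equals its own $S_2$-ification inside its total quotient ring, and since $\mathcal{R}^B \subseteq \overline{\mathcal{R}}$ sits in that same total quotient while agreeing with $\mathcal{R}$ at every prime of height $\le 1$, we conclude $\mathcal{R}^B = \mathcal{R}$, i.e.\ $(Q^n)^B = Q^n$ for all $n \ge 0$. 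I expect this to be the main obstacle: rigorously deducing the Krull dimension bound from the polynomial growth of $\ell_R(M_n)$ requires some care since $\mathcal{R}_0 = R$ is not Artinian, and an alternative route is to invoke an analog of \cite[Corollary 4.9]{HM} reformulating $e_1^B(Q) = 0$ as the reduction number of the filtration $\{(Q^n)^B\}$ with respect to $Q$ being zero.

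\medskip

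Finally, we show by induction on $\sum_i t_i$ that $(x_1^{t_1}, \ldots, x_d^{t_d})^B = (x_1^{t_1}, \ldots, x_d^{t_d})$ for every tuple $(t_1, \ldots, t_d)$ of positive integers. The base case $t_1 = \cdots = t_d = 1$ is exactly $Q^B = Q$. For the inductive step with $t_1 \ge 2$ (say), let $r \in (x_1^{t_1}, x_2^{t_2}, \ldots, x_d^{t_d})^B$; by the inductive hypothesis applied to $(t_1 - 1, t_2, \ldots, t_d)$, write $r = a_1 x_1^{t_1 - 1} + \sum_{i \ge 2} a_i x_i^{t_i}$, so $a_1 x_1^{t_1 - 1} \in (x_1^{t_1}, x_2^{t_2}, \ldots, x_d^{t_d}) B$. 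Since $x_1, x_2^{t_2}, \ldots, x_d^{t_d}$ remains a regular sequence on $B$, the standard colon-ideal computation forces $a_1 \in (x_1, x_2^{t_2}, \ldots, x_d^{t_d})^B$; a second application of the inductive hypothesis, this time to $(1, t_2, \ldots, t_d)$, gives $a_1 \in (x_1, x_2^{t_2}, \ldots, x_d^{t_d})$, and substituting back shows $r \in (x_1^{t_1}, x_2^{t_2}, \ldots, x_d^{t_d})$. In the Cohen--Macaulay setting these equalities for all tuples are equivalent to the injectivity of $H_\m^d(R) \to H_\m^d(B)$, so $R$ is $\mathrm{BCM}_B$-rational. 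The $F$-rationality statement in characteristic $p > 0$ is then immediate from Remark \ref{remark: tightvsBCM}.
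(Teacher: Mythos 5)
Your argument is correct, and after the common opening (Theorem \ref{theorem: Nonnegativity} forces $e_1^B(Q)=e_1(Q)=0$, whence $R$ is Cohen--Macaulay --- for this step the right citation is \cite{GGH10}, not \cite{MSV}, which only gives $e_1(Q)\le 0$) it diverges from the paper's proof in both remaining steps. The paper obtains $Q^B=Q$ in one line from \cite[Corollary 4.9]{HM} applied to the filtration $\{(Q^n)^B\}_n$, and then finishes with a socle argument: since $R$ is Cohen--Macaulay, $\mathrm{Soc}(H_\m^d(R))\cong\mathrm{Soc}(R/Q)$, and the commutative square formed by $R/Q\hookrightarrow B/QB$ and $B/QB\hookrightarrow H_\m^d(B)$ shows that the kernel of $H_\m^d(R)\to H_\m^d(B)$ meets the socle trivially, hence is zero. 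You instead prove $(Q^n)^B=Q^n$ for all $n$ via an $S_2$-ification argument on $\mathcal{R}=R[Qt]$, and then propagate the single equality $Q^B=Q$ to all ideals $(x_1^{t_1},\dots,x_d^{t_d})$ by a colon-ideal induction, reading off injectivity on $H_\m^d$ from the direct-limit description of local cohomology. Both of your steps do go through: in the first, $M=\mathcal{R}^B/\mathcal{R}$ is a finitely generated graded module over $\mathcal{R}/\m^k\mathcal{R}$, which is standard graded over an Artinian ring, so its Hilbert function is eventually polynomial of degree $\dim M-1\le d-2$; since $\mathcal{R}$ is Cohen--Macaulay and equidimensional of dimension $d+1$, the support of $M$ has codimension at least $2$, and the $S_2$ property of $\mathcal{R}$ then forces $\mathcal{R}^B=\mathcal{R}$. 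Still, the route through \cite[Corollary 4.9]{HM} that you mention as a fallback is exactly what the paper does and is cleaner. Your colon induction is a correct and more elementary substitute for the socle argument (it uses only that permuted powers of $x_1,\dots,x_d$ are regular sequences on $B$); note that it consumes only the base case $Q^B=Q$, so the full Rees-algebra computation of $(Q^n)^B$ for $n\ge 2$ is never actually needed.
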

\begin{proof}
By Theorem \ref{theorem: Nonnegativity}, we know that $e_1^B(Q)=e_1(Q)=0$. By the main result of \cite{GGH10}, $e_1(Q)=0$ implies that $R$ is Cohen-Macaulay.
By \cite[Corollary 4.9]{HM}, we have $Q^B = Q$. Now we consider the commutative diagram:
\[
\xymatrix{
R/Q \ar@{^{(}->}[d]  \ar@{^{(}->}[r]      & B/QB \ar@{^{(}->}[d]\\
H^d_{\frak m}(R) \ar[r]   & H^d_{\frak m}(B)
          }
\]
where the injectivity of the top row follows from $Q^B=Q$, the injectivity of the left column is because $R$ is Cohen-Macaulay, and the injectivity of the right column is because $B$ is balanced big Cohen-Macaulay. Since $R$ is Cohen-Macaulay, we know that $\mathrm{Soc}(R/Q) \cong \mathrm{Soc}(H^d_{\frak m}(R))$. Chasing the commutative diagram we find that $H^d_{\frak m}(R) \to H^d_{\frak m}(B)$ injective. Therefore $R$ is $\mathrm{BCM}_B$-rational.
\end{proof}

\begin{remark}
It is clear from the proof of Proposition \ref{proposition: BCMrational} that Conjecture \ref{conjecture: BCMrational} holds when $R$ is Cohen-Macaulay, and this essentially follows from \cite[Corollary 4.9]{HM}.
\end{remark}

\end{document}